\theoremstyle{plain}
\newtheorem{lemma}{Lemma}[section]
\newtheorem{theorem}[lemma]{Theorem}
\newtheorem{corollary}[lemma]{Corollary}
\newtheorem{proposition}[lemma]{Proposition}
\newtheorem{conjecture}[lemma]{Conjecture}
\theoremstyle{remark}
\newtheorem{remark}[lemma]{Remark}
\newtheorem{example}[lemma]{Example}
\newtheorem{definition}[lemma]{Definition}
\newcommand{\QQ}{\mathbb{Q}}
\newcommand{\ring}{\mathbb{Z}}
\newcommand{\N}{\mathbb{N}}
\newcommand{\E}{\mathbb{E}}
\newcommand{\nbc}{\kappa}
\newcommand{\cl}{\mathcal{C}}
\newcommand{\pcl}{\mathcal{PC}}
\newcommand{\Young}{\mathcal{Y}}
\newcommand{\A}{\mathcal{A}}
\newcommand{\B}{\mathcal{B}}
\newcommand{\Area}{\mathfrak{A}}
\newcommand{\paths}{\mathcal{P}}
\newcommand{\SD}{\text{SD}}
\DeclareMathOperator{\type}{type}
\DeclareMathOperator{\Cat}{Cat}
\DeclareMathOperator{\Id}{Id}
\DeclareMathOperator{\Fix}{Fix}
\author{Valentin F\'eray}
\address{LaBRI, Universit\'e Bordeaux 1, 351 cours de la Lib\'eration, 33 400
Talence, France}
\email{feray@labri.fr}
\title[Complete functions in Jucys-Murphy elements]{On complete functions in Jucys-Murphy elements}
\keywords{symmetric functions, Jucys-Murphy elements, symmetric group algebra, Hecke algebra for $(S_{2n},H_n)$ \\
{\it MSC:} 05E15,05A05}
\begin{document}
 
\maketitle

\begin{abstract}
The problem of computing the class expansion of some symmetric functions 
evaluated in Jucys-Murphy elements appears in different contexts, for instance in the computation of matrix integrals.
Recently, M. Lassalle gave a unified algebraic method to obtain
some induction relations on the coefficients in this kind of expansion.
In this paper, we give a simple purely combinatorial proof of his result.
Besides, using the same type of argument, we obtain new simpler formulas.
We also prove an analogous formula in the Hecke algebra of $(S_{2n},H_n)$
and use it to solve a conjecture of S. Matsumoto on the subleading term of orthogonal Weingarten function.
Finally, we propose a conjecture for a continuous interpolation between both problems.
\end{abstract}
 
\section{Introduction}
 
\subsection{Background}
 
The Jucys-Murphy elements $J_{i}$ lie in the symmetric group algebra $\ring[S_{n}]$.
Despite their beautiful properties, their definition is very elementary:
\[J_i = \sum_{j<i} (j\ i)\]
where $(j\ i)$ is the transposition in $S_n$ exchanging $i$ and $j$.
They have been introduced separately by A. Jucys \cite{Jucys1966,Jucys1974} and G. Murphy \cite{Murphy1981}
and have played since a quite important role in representation theory.
Indeed, they act diagonally on the Young basis of any irreducible representation $V_{\lambda}$:
the eigenvalue of $J_{i}$ on an element $e_{T}$ of this basis ($T$ is a standard tableau of shape $\lambda$)
is simply given by the content (\textit{i.e.} the difference between the column-index and the row-index)
of the box of $T$ containing $i$.

In fact, representation theory of symmetric groups $S_n$ can be constructed
entirely using this property (see \cite{OkVe1996}).
We also refer to papers of Biane \cite{BianeAsymptoticsCharacters} and 
Okounkov \cite{Okounkov2000} for nice applications of Jucys-Murphy elements to asymptotic representation theory.\bigskip

A fundamental property, already observed by Jucys and Murphy, is that elementary symmetric functions evaluated in the $J_{i}$'s have a very nice expression (this evaluation is well-defined because Jucys-Murphy elements commute with each other).
More precisely, if $\nbc(\sigma)$ denotes the number of cycles of a permutation $\sigma \in S_n$, then
\begin{equation}\label{EqElJM}
    e_{k}(J_{1},\dots,J_{n}) = \sum_{\sigma \in S_{n} \atop \nbc(\sigma)=n-k} \sigma.
\end{equation}
As this is a central element in the group algebra,
all symmetric functions evaluated in Jucys-Murphy elements are also central.
Therefore it is natural to wonder what their class expansion is.
In other terms, given some symmetric function $F$, can we compute the coefficients $a^F_\lambda$ defined by:
\[ F(J_{1},\dots,J_{n}) =\sum_{\lambda \vdash n} a^F_\lambda \cl_{\lambda},\]
where the sum runs over all partitions of $n$ and 
$\cl_{\lambda}$ denotes the sum of all permutations of cycle-type $\lambda$?
This problem may seem anecdotal at first sight,
but it in fact appears in different domains of mathematics:
\begin{itemize}
  \item When $F$ is a power sum $p_{k}$, it is linked with mathematical physics via vertex operators and Virasoro algebra (see \cite{LascouxThibon2001}).
  \item When $F$ is a complete symmetric function $h_{k}$, the coefficients appearing are exactly the coefficients in the asymptotic expansion of unitary Weingarten functions. The latter is the elementary brick to compute polynomial integrals over the unitary group (see \cite{NovakJMWeingarten, ZinnJustinJMWeingarten}).
  \item The inverse problem (how can we write a given conjugacy class $\cl_{\lambda}$
      as a symmetric function in Jucys-Murphy element) is equivalent
      to try to express character values as a symmetric function of the contents.
      This question has been studied in some papers
      \cite{CorteelGoupilSchaeffer2004, LassalleCaractereExplicite}
      but never using the combinatorics of Jucys-Murphy elements.
\end{itemize}

\subsection{Previous and new results}
As mentioned in the paragraph above, the class expansion of elementary functions in Jucys-Murphy
elements is very simple and was first established by A. Jucys.
The next result of this kind was obtained by A. Lascoux and J.-Y. Thibon via an algebraic method: they gave the coefficients of the class expansion of power sums in Jucys-Murphy elements as some coefficients of an explicit series \cite{LascouxThibon2001}.

Then S. Matsumoto  and J. Novak \cite{MatsumotoNovakMonomialJM} computed the coefficients of the permutations of maximal absolute length in any monomial function in Jucys-Murphy elements.
Their proof is purely combinatorial but does not seem to be extendable to all coefficients.
As monomial functions form a linear basis of symmetric functions, one can deduce from their result a formula for the top coefficients for any symmetric function, in particular for complete functions (see also \cite{MurrayGeneratorsCentreSGA,CollinsMatsumotoOrthogonalWeingarten}).
To be comprehensive, let us add that the authors also obtained all coefficients of cycles in complete symmetric functions using character theory (their approach works for all cycles, not only the ones of maximal length).

Recently, M. Lassalle \cite{LassalleJM} gave a unified method to obtain some induction relations for the coefficients of the class expansion of several families of symmetric functions in Jucys-Murphy elements.
These induction relations allow to compute any coefficient quite quickly.
Besides, it is possible to use them to recover the results of A. Jucys, A. Lascoux and J.-Y. Thibon
and also the top component of complete symmetric functions.
Therefore, the work of M. Lassalle unifies most of the results obtained until now on the subject.

He proves his result with a sophisticated algebraic machinery:
he begins by translating the problem in terms of shifted symmetric functions
and then introduces some relevant differential operators.\medskip

In this paper, we give a simple combinatorial proof of his induction formulas.
Our method of proof can also be adapted to find another formula (Theorem~\ref{ThmNewInd}).
The latter is new and quite simple.
An example of application is the following:
 using Matsumoto's and Novak's result on cycles,
 we are able to compute more generating series of coefficients.

\subsection{Generalizations}
An analogous problem can be considered in the Hecke algebra of the Gelfand pair $(S_{2n},H_n)$ 
(here, $H_n$ is the hyperoctahedral group seen as a subgroup of $S_{2n}$).
Definitions are given in section~\ref{SectDoubleClass}.
In this algebra, it is relevant to consider symmetric functions in
odd-indexed Jucys-Murphy elements.

It is a remarkable fact that complete symmetric functions evaluated in these elements are also linked with integrals
over groups of matrices, but the complex unitary group should be replaced
by the real orthogonal group (see \cite{ZinnJustinJMWeingarten,MatsumotoOddJM}).

In paper \cite{MatsumotoOddJM}, S. Matsumoto computed the coefficients of permutations of maximal length
in the case of monomial symmetric functions (hence obtaining an analog of his result with J. Novak).

Our new induction formula extends quite easily to this framework.
A consequence is a proof of a conjecture of S. Matsumoto (see paragraph \ref{SubsubsectProofMatsumoto}). 

In fact, one can even define a generalization of the problem with a parameter $\alpha$
which interpolates between both frameworks:
\begin{itemize}
    \item the class expansion of symmetric functions in Jucys-Murphy elements corresponds to the case $\alpha=1$;
    \item the analog in the Hecke algebra of $(S_{2n},H_n)$ corresponds to the case $\alpha=2$.
\end{itemize}
We recall this construction in section~\ref{SectGeneralisation}.

A very interesting point in Lassalle's method to obtain induction formulas
is that it works almost without changing anything with a general parameter
$\alpha$ \cite[section 11]{LassalleJM}.
Unfortunately, we are not (yet) able to extend our work to this general setting.
However, computer exploration suggests that some of the results still
hold in the general case and 
we present a conjecture in this direction in section~\ref{SectGeneralisation}.
 
\subsection{Organization of the paper}
In section~\ref{SectSymGrpAlg}, we present our results in the symmetric group algebra. 

Then, in section~\ref{SectDoubleClass}, we look at the analogous problem in the Hecke algebra of $(S_{2n},H_n)$.

Finally, in section~\ref{SectGeneralisation}, we present a conjecture for the continuous deformation between these two models.

\section{Induction relations}\label{SectSymGrpAlg}

\subsection{Definitions and notations}
The combinatorics of integer partitions is important in this work
as they index the conjugacy classes in symmetric groups.
A partition $\lambda$ of $n \geq 0$ (we note $\lambda \vdash n$) is a
non-increasing finite sequence of positive integers (called parts) of sum $n$.
Its number of elements is denoted $\ell(\lambda)$.
We use the notation $\lambda \backslash i$ for the partition
obtained from $\lambda$ by erasing one part equal to $i$ (we only use this 
notation when $\lambda$ has at least one part equal to $i$).
In a similar fashion, $\lambda \cup i$ is the partition obtained by adding a part
equal to $i$ (in an appropriate place such that the sequence remains 
non-increasing).

Let us denote by $S_{n}$ the symmetric group of size $n$ and
by $\ring[S_{n}]$ its group algebra over the integer ring.
Throughout the paper, the coefficient of a permutation $\sigma \in S_n$
in an element $x \in \ring[S_n]$ will be denoted $[\sigma] x$.
If this coefficient is non-zero, we say that $\sigma$ is in $x$
(this is a small abuse of language, where we consider $x$ as its support).

\begin{definition}
The Jucys-Murphy elements $J_{i}$ (for $1\leq i \leq n$) are defined by:
\[J_{i} = (1\ i) + (2\ i) + \dots + (i-1\ i)\ \  \in \ring[S_{n}].\]
\end{definition}

Note that $J_{1}=0$ but we include it in our formulas for aesthetic reasons. 

\begin{proposition}
\begin{itemize}
  \item Jucys-Murphy elements commute with each other.
  \item If $F$ is a symmetric function, $F(J_{1},J_{2},\dots,J_{n})$ lies
      in the center of the symmetric group algebra $Z(\ring[S_{n}])$.
\end{itemize}
\end{proposition}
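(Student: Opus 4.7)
The plan is to handle the two bullets separately, treating the first as preparation for the second.

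\textbf{Commutation.} My strategy is to prove the stronger statement that $J_i$ commutes with every element of the subalgebra $\ring[S_{i-1}] \subseteq \ring[S_n]$, where $S_{i-1}$ is embedded as the subgroup fixing the points $i, i+1,\dots,n$. This follows from a direct conjugation computation: for $\sigma \in S_{i-1}$,
\[ \sigma J_i \sigma^{-1} = \sum_{j<i} (\sigma(j)\ \sigma(i)) = \sum_{j<i} (\sigma(j)\ i) = J_i, \]
since $\sigma$ fixes $i$ and permutes $\{1,\dots,i-1\}$ bijectively. Now, for every $k<i$, the element $J_k$ lies in $\ring[S_k] \subseteq \ring[S_{i-1}]$, so the commutation $J_k J_i = J_i J_k$ is immediate.

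\textbf{Centrality.} The plan is to reduce the centrality of $F(J_1,\dots,J_n)$ to centrality of a small generating family. Over $\ring$, the ring of symmetric functions is the polynomial ring $\ring[e_1,e_2,\dots]$, so any symmetric function evaluated at the (now known to commute) $J_i$'s rewrites as a polynomial in the elements $e_k(J_1,\dots,J_n)$ for $1\le k \le n$. Since $Z(\ring[S_n])$ is a subring of $\ring[S_n]$, it then suffices to show that each $e_k(J_1,\dots,J_n)$ is central. This is precisely the content of equation~(\ref{EqElJM}): its right-hand side is supported on permutations with a prescribed number of cycles, and the number of cycles is a conjugation invariant, so the sum decomposes as an integer combination of conjugacy class sums $\cl_\lambda$, which are central by definition.

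\textbf{Where a difficulty might lie.} Once equation~(\ref{EqElJM}) is accepted, there is essentially no further obstacle; and in this paper it is presented as a known identity of Jucys and Murphy in the introduction, so I would simply cite it. If one insisted on proving centrality without relying on~(\ref{EqElJM}), the naive conjugation approach of trying to show $\tau F(J_1,\dots,J_n)\tau^{-1} = F(J_1,\dots,J_n)$ via symmetry of $F$ fails outright, because $\tau J_i \tau^{-1} = \sum_{j<i}(\tau(j)\ \tau(i))$ is not of the form $J_{\tau(i)}$, and the symmetry of $F$ in its arguments does not translate into invariance under conjugation of the individual transpositions inside each $J_i$. Sidestepping this subtlety is exactly what makes (\ref{EqElJM}) the natural tool here.
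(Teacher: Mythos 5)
The paper states this proposition without proof (it is the classical observation of Jucys and Murphy), so there is nothing to compare against; your argument is correct and is the standard one. The conjugation computation $\sigma J_i\sigma^{-1}=J_i$ for $\sigma\in S_{i-1}$ does give commutativity, and the reduction of centrality to the elementary symmetric functions via the fundamental theorem of symmetric polynomials over $\ring$, combined with equation~\eqref{EqElJM} (whose right-hand side is visibly a $\ring$-combination of the $\cl_\lambda$), is exactly the route the paper implicitly relies on. Your closing remark about why naive conjugation fails is also accurate; the only alternative elementary route would be the degenerate-affine-Hecke-algebra computation with adjacent transpositions, which is not needed here.
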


We recall that the cycle-type of a permutation $\sigma$ in $S_{n}$,
which we will denote $\type(\sigma)$,
is by definition the non-increasing sequence of the lengths of its cycles.
This is an integer partition of $n$,
which determines the conjugacy class of the permutation in the group $S_{n}$.

A basis of the center of the group algebra $Z(\ring[S_{n}])$ is given by
the sums of the conjugacy classes, that is the family of elements
\[ \cl_{\lambda} = \sum_{\sigma \in S_{n} \atop \sigma \text{ has cycle-type } \lambda} \sigma, \]
where $\lambda$ runs over all partitions of $n$.
Therefore, for any symmetric function $F$,
there exists some integer numbers $a^F_{\lambda}$ such that:
\[ F(J_{1},\dots,J_{n}) = \sum_{\lambda \vdash n} a^F_{\lambda} \cl_{\lambda}.\]
In other terms, $a^F_{\lambda}$ is the coefficient of any permutation $\sigma$
of type $\lambda$ in $F(J_{1},\dots,J_{n})$.

We will here focus on the case where $F$ is a complete symmetric function
(so $a^{h_{k}}_{\lambda}$ will be denoted $a^{k}_{\lambda}$)
because of the link with some integrals over unitary groups mentioned in the introduction.
Nevertheless, paragraph~\ref{SubsectOtherSym} is devoted to the case of other symmetric functions.

\begin{example}
 As an illustration, let us look at the case $k=2$ and $n=3$:
\begin{align*}
 h_{2}(J_{1},J_{2},J_{3}) &= (1\ 2)^2 + \big((1\ 3) + (2\ 3)\big)^2 + (1\ 2) \cdot \big((1\ 3) + (2\ 3)\big); \\
 &= \Id + 2\Id + (1\ 2\ 3) + (1\ 3\ 2) + (1\ 2\ 3)+(1\ 3\ 2); \\
 &= 3\cl_{1^3} + 2 \cl_{3}.
\end{align*}
Note that the coefficient of a permutation at the end of the computation
does depend only on its cycle-type, although $1$, $2$ and $3$ play different roles in the computation. 

In other terms, we have computed the following coefficients:
\[a^2_{(1^3)} = 3, \quad a^2_{(2\ 1)} = 0, \quad a^2_{(3)} = 2.\]
\end{example}

\subsection{A combinatorial proof of Lassalle's formula}
\label{SubsectLassalleNewProof}
In this paragraph, we give an elementary proof of the following theorem,
which has been proved by M. Lassalle \cite{LassalleJM} using sophisticated algebraic tools.
\begin{theorem}[Lassalle, 2010] \label{ThmLassalle}
For any partition $\rho$ and integer $k$, one has:
\begin{align}
    \label{EqLassalle1} a_{\rho \cup 1}^{k} &= a_\rho^{k} +
    \sum_{i=1}^{\ell(\rho)} \rho_{i}
    a^{k-1}_{\rho \setminus (\rho_{i}) \cup (\rho_{i}+1)} ; \\
    \label{EqLassalle2} \sum_{i=1}^{\ell(\rho)}
    \rho_{i} a^{k}_{\rho \setminus (\rho_{i}) \cup (\rho_{i}+1)} 
    &= \sum_{1\leq i,j \leq \ell(\rho) \atop i \neq j}
    \rho_{i} \rho_{j} a^{k-1}_{\rho \setminus (\rho_{i}, \rho_{j})
    \cup (\rho_{i} + \rho_{j} +1) }  \\
    \nonumber & \qquad\qquad  + \sum_{i=1}^{\ell(\rho)} \rho_i \sum_{r+s=\rho_{i}+1 \atop r,s \geq 1}
a^{k-1}_{\rho \setminus (\rho_{i}) \cup (r,s) }.    \end{align}
\end{theorem}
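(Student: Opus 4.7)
The plan is to exploit the classical identity $h_k(x_1,\dots,x_N) = h_k(x_1,\dots,x_{N-1}) + x_N\, h_{k-1}(x_1,\dots,x_N)$, which evaluated at Jucys--Murphy elements gives
\[
h_k(J_1,\dots,J_N) \;=\; h_k(J_1,\dots,J_{N-1}) \;+\; J_N \cdot h_{k-1}(J_1,\dots,J_N).
\]
Setting $N = |\rho|+1$ and extracting the coefficient of a well-chosen permutation $\sigma_0\in S_N$ on each side will yield Lassalle's two identities. The only combinatorial input needed is the following elementary description of how left multiplication by a transposition $(j\ N)$ modifies the cycle structure of a permutation $\tau \in S_N$: if $\tau(N)=N$, then $(j\ N)\tau$ inserts $N$ into the cycle of $\tau$ containing $j$, increasing its length by $1$; otherwise, if $j$ lies in the same cycle of $\tau$ as $N$ then that cycle is split into two, and if $j$ lies in a different cycle then those two cycles are merged.

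For Equation \eqref{EqLassalle1}, I take $\sigma_0\in S_N$ of type $\rho\cup(1)$ fixing $N$. The coefficient of $\sigma_0$ on the left-hand side of the recursion is $a^{k}_{\rho\cup 1}$, while the first summand on the right contributes $a^{k}_\rho$ because $\sigma_0\big|_{[N-1]}$ has type $\rho$. For the second summand, expanding $J_N = \sum_{j<N}(j\ N)$ turns it into $\sum_{j=1}^{N-1} a^{k-1}_{\type((j\ N)\sigma_0)}$, and by the cycle lemma $(j\ N)\sigma_0$ has type $\rho\setminus(\rho_i)\cup(\rho_i+1)$ when $j$ lies in the cycle of $\sigma_0$ of length $\rho_i$. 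Grouping the $j$'s by cycle yields $\sum_i \rho_i\, a^{k-1}_{\rho\setminus(\rho_i)\cup(\rho_i+1)}$, as required.

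For Equation \eqref{EqLassalle2}, I fix an index $i$ and now choose $\sigma_0\in S_N$ of type $\rho\setminus(\rho_i)\cup(\rho_i+1)$ with $N$ placed in the cycle of length $\rho_i+1$. The left-hand side of the recursion gives $a^{k}_{\rho\setminus(\rho_i)\cup(\rho_i+1)}$, and the first term on the right vanishes because $\sigma_0$ does not fix $N$. The cycle lemma then splits $\sum_{j<N} a^{k-1}_{\type((j\ N)\sigma_0)}$ into two contributions. Choices of $j$ inside the long cycle realize every ordered split $\rho_i+1 = r+s$ with $r,s\ge 1$ exactly once (as $j$ traverses the $\rho_i$ non-$N$ elements of that cycle), contributing $\sum_{r+s=\rho_i+1,\,r,s\ge 1} a^{k-1}_{\rho\setminus(\rho_i)\cup(r,s)}$; choices of $j$ in a different cycle of length $\rho_{j'}$ merge the two cycles into one of length $\rho_i+\rho_{j'}+1$, and there are $\rho_{j'}$ such $j$'s, contributing $\sum_{j'\neq i}\rho_{j'}\, a^{k-1}_{\rho\setminus(\rho_i,\rho_{j'})\cup(\rho_i+\rho_{j'}+1)}$. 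Multiplying the resulting identity by $\rho_i$ and summing over $i$ produces Equation \eqref{EqLassalle2} on the nose.

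The only real obstacle is the cycle analysis itself: one must carefully verify the split/merge dichotomy and count how many $j$'s realize each outcome, which requires a hands-on computation with cycle notation but introduces no conceptual difficulty. A minor bookkeeping point is that when $\rho$ has repeated parts the index $i$ should be read as labeling distinct parts rather than distinct values, but this has no effect on the derivation. Notably, the argument for \eqref{EqLassalle2} actually produces an unsummed relation (one for each $i$) that is \emph{strictly stronger} than Lassalle's formula, which may be of independent interest.
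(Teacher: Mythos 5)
Your proof is correct, but for the second identity it takes a genuinely different route from the paper's own proof of Theorem~\ref{ThmLassalle}. For \eqref{EqLassalle1} your argument coincides with the paper's: you extract the coefficient of a permutation of type $\rho\cup(1)$ fixing $N$ from the recursion $h_k = h_k + J_N h_{k-1}$ (the paper packages the "fixes $N$" condition into an operator $\E$, but the computation is the same). For \eqref{EqLassalle2}, however, the paper does \emph{not} argue as you do: it multiplies the recursion by $J_{n+1}$, applies $\E$, and computes the coefficient of a permutation of type $\rho$ in $\E\big(J_{n+1}^2 h_{k-1}\big)$, which requires a double sum over pairs $(j_1,j_2)$ and a two-case split/merge analysis; this produces the summed identity directly. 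You instead extract the coefficient of a permutation of type $\rho\setminus(\rho_i)\cup(\rho_i+1)$ with $N$ placed in the long cycle, obtaining for each $i$ the unsummed relation
\begin{equation*}
a^{k}_{\rho \setminus (\rho_{i}) \cup (\rho_{i}+1)}
= \sum_{j'\neq i}\rho_{j'}\, a^{k-1}_{\rho \setminus (\rho_{i}, \rho_{j'})\cup (\rho_{i} + \rho_{j'} +1)}
+ \sum_{r+s=\rho_{i}+1 \atop r,s \geq 1} a^{k-1}_{\rho \setminus (\rho_{i}) \cup (r,s)},
\end{equation*}
and then sum against $\rho_i$. This unsummed relation is precisely the paper's Theorem~\ref{ThmNewInd} specialized to $m=\rho_i+1$, and the paper itself remarks (after Theorem~\ref{ThmNewInd}) that \eqref{EqLassalle2} is a linear combination of such specializations while the converse fails. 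So you have in effect rediscovered the paper's ``new relation'' and deduced Lassalle's formula from it: your route uses only a single sum over $j$ and yields a strictly stronger statement, at the cost of not literally reproducing Lassalle's operator-flavoured derivation. Both the cycle split/merge dichotomy and the counts ($\rho_i$ choices of $j$ realizing each merge, one choice per ordered split $r+s=\rho_i+1$) check out, and your remark about repeated parts is the right level of care.
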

\begin{proof}
We start from the obvious induction relation
\begin{equation}\label{EqRecH}
h_{k}(J_{1},\dots,J_{n+1}) = h_{k}(J_{1},\dots,J_{n}) + J_{n+1} h_{k-1}(J_{1},\dots,J_{n+1}) 
\end{equation}
and we apply to it the following operator:
\[ \E : \begin{array}{rcl}
 \ring[S_{n+1}] & \to & \ring[S_{n}] \\
 \sigma & \mapsto & \left\{
\begin{array}{l}
   \sigma / \{1,\dots,n\} \text{ if }\sigma(n+1) = n+1 ;\\
   0 \text{ else.} 
\end{array}
 \right.
\end{array} \]
Then we look at the coefficient of a permutation $\sigma$ of type $\rho \vdash n$
(in the following, $\sigma'$ is the image of $\sigma$ by the canonical embedding
of $S_{n}$ into $S_{n+1}$, which means that we add $n+1$ as fixed point).
\begin{align}
[\sigma]  \E \big(h_{k}(J_{1},\dots,J_{n+1})\big) &= [\sigma'] h_{k}(J_{1},\dots,J_{n+1}) = a_{\rho \cup 1}^{k}, \label{EqCoefSigmaEHknp1} \\
[\sigma]  \E \big(h_{k}(J_{1},\dots,J_n)\big) &= [\sigma] h_{k}(J_{1},\dots,J_{n}) = a_\rho^{k}, \label{EqCoefSigmaEHkn} \\
[\sigma]  \E \big(J_{n+1} h_{k-1}(J_{1},\dots,J_{n+1})\big)&=
[\sigma'] \sum_{j \leq n} (j \ n+1) h_{k-1}(J_{1},\dots,J_{n+1}) \nonumber\\
& =\sum_{j \leq n} [(j \ n+1) \sigma'] h_{k-1}(J_{1},\dots,J_{n+1}) \nonumber\\
& = \sum_{j \leq n} a^{k-1}_{\type ( (j \ n+1) \sigma' )}, \nonumber
\end{align}
Let us label the cycles of $\sigma$ with the numbers $1,2,\ldots,\ell(\rho)$
such that the $i$-th cycle of $\sigma$ has length $\rho_i$.
It is easy to see that $(j \ n+1) \sigma'$ has exactly the same cycle
decomposition as $\sigma$ except that $n+1$ has been added right before $j$.
Therefore, if $j$ is in the $i$-th cycle of $\sigma$, then $(j \ n+1) \sigma'$
has cycles of length $\rho_1,\rho_2,\dots,\rho_i+1,\dots,\rho_{\ell(\rho)}$.
In other terms, its type is $\rho \setminus (\rho_i) \cup (\rho_i+1)$.
As there are $\rho_i$ elements in the $i$-th cycle of $\sigma$, one obtains:
\begin{equation}
 [\sigma]  \E \big(J_{n+1} h_{k-1}(J_{1},\dots,J_{n+1})\big) =
 \sum_{1 \leq i \leq \ell(\rho)} \rho_i a^{k-1}_{\rho \setminus (\rho_i) \cup (\rho_i+1)}.
    \label{EqCoefSigmaEJHk}
\end{equation}
Putting together equations~\eqref{EqRecH},~\eqref{EqCoefSigmaEHknp1},
\eqref{EqCoefSigmaEHkn} and~\eqref{EqCoefSigmaEJHk}, we obtain the first
part of the theorem.\medskip

The second equality is obtained the same way except that we multiply equation~\eqref{EqRecH} by $J_{n+1}$ before applying the operator $\E$. One obtains:
\begin{multline}\label{EqEJnp1TRecH}
\E \big(J_{n+1} h_{k}(J_{1},\dots,J_{n+1}) \big) = \E \big( J_{n+1} h_{k}(J_{1},\dots,J_{n}) \big) \\
+ \E \big(J_{n+1}^{2} h_{k-1}(J_{1},\dots,J_{n+1}) \big).
\end{multline}
The coefficient of $\sigma$ in the left-hand side has been computed: see equation
\eqref{EqCoefSigmaEJHk}.
Let $\tau$ be a permutation in $h_{k}(J_{1},\dots,J_{n})$. It fixes $n+1$
and, hence, $(j\ n+1) \tau$ can not fix $n+1$ for $j=1,\dots,n$. Therefore,
\begin{equation}\label{EqEJnp1Hkn}
\E \big( J_{n+1} h_{k}(J_{1},\dots,J_{n}) \big) = 0.
\end{equation}
For the last term, we write:
\begin{multline*}\label{EqEJ2H}
[\sigma] \E \big(J_{n+1}^{2} h_{k-1}(J_{1},\dots,J_{n+1}) \big) \\
= [\sigma'] \sum_{j_{1},j_{2} \leq n} (j_{1} \ n+1)\cdot (j_{2}\ n+1) \cdot
h_{k-1}(J_{1},\dots,J_{n+1}) \\
= \sum_{j_{1},j_{2} \leq n}  [(j_{2} \ n+1)\cdot (j_{1}\ n+1) \cdot \sigma']
h_{k-1}(J_{1},\dots,J_{n+1}) \\
= \sum_{j_{1},j_{2} \leq n} a^{k-1}_{\type (
( j_{2} \ n+1)\cdot (j_{1}\ n+1) \cdot \sigma')}.
\end{multline*}
As before, we label the cycles of $\sigma$.
We split the sum in two parts, depending on whether $j_{1}$ and $j_{2}$
are in the same cycle of $\sigma$ or not:
\begin{itemize}
 \item Suppose that both $j_{1}$ and $j_{2}$ are in the $i$-th cycle of $\sigma$.
 That implies that $j_{2} = \sigma^{m}(j_{1})$ for some integer $m$
 between $1$ and $\rho_{i}$
 (eventually $j_1=j_2$, which corresponds to $m=\rho_i$).
 Then $(j_{1} \ n+1)\cdot (j_{2}\ n+1) \cdot \sigma'$ has the same cycles
 as $\sigma$ except for its $i$-th cycle, as well as two other cycles:
  \[ \big(j_{1}, \sigma(j_{1}), \dots \sigma^{m-1}(j_{1}) \big) \text{ and }   \big(j_{2}, \sigma(j_{2}), \dots \sigma^{\rho_{i}-m-1}(j_{2}), n+1 \big).\]
  Thus it has cycle-type $\rho \setminus (\rho_{i}) \cup (m,\rho_{i}-m+1)$.
  There are $\rho_i$ elements in the $i$-th cycle of $\sigma$ and, hence, 
  $\rho_i$ possible values for $j_1$.
  For each value of $j_1$, there is exactly one value of $j_2$ corresponding
  to each value of $m$ between $1$ and $\rho_i$.
  Therefore, one has:
  \begin{align*}
       \sum_{j_{1},j_{2} \leq n \atop j_1 \sim_\sigma j_2}
  a^{k-1}_{\type (( j_{2} \ n+1)\cdot (j_{1}\ n+1) \cdot \sigma')}
  &= \sum_{i \leq \ell(\rho)} \rho_i \sum_{m =1}^{\rho_i}
  a^{k-1}_{\rho \setminus (\rho_{i}) \cup (m,\rho_{i}-m+1)} \\
  &= \sum_{i \leq \ell(\rho)} \rho_i \sum_{r+s = \rho_i+1 \atop r,s \geq 1}
  a^{k-1}_{\rho \setminus (\rho_{i}) \cup (r,s)},
  \end{align*}
  where $j_1 \sim_\sigma j_2$ means that $j_1$ and $j_2$ are in the same cycle
  of $\sigma$.
  \item Let us suppose now that $j_{1}$ and $j_{2}$ are respectively in the
      $i_1$-th and $i_2$-th cycles of $\sigma$ with $i_1 \neq i_2$.
      In this case $(j_{1} \ n+1)\cdot (j_{2}\ n+1) \cdot \sigma'$ has the same
      cycles as $\sigma$ except for its $i_1$-th and $i_2$-th cycles,
      as well as one new cycle:
  \[ \big(j_{1}, \sigma(j_{1}), \dots \sigma^{\rho_{i_{1}}-1}(j_{1}), n+1,
  j_{2}, \sigma(j_{2}), \dots \sigma^{\rho_{i_{2}}-1}(j_{2}) \big). \]
Thus $(j_{1} \ n+1)\cdot (j_{2}\ n+1) \cdot \sigma'$ has cycle-type
$\rho \setminus (\rho_{i_{1}}, \rho_{i_{2}}) \cup (\rho_{i_{1}}+\rho_{i_{2}} + 1)$.
As there are $\rho_{i_1}$ (resp. $\rho_{i_2}$) elements in the $i_1$-th 
(resp. $i_2$-th) cycle of $\sigma$, one obtains:
  \begin{align*}                                                                 
     \sum_{j_{1},j_{2} \leq n \atop j_1 \nsim_\sigma j_2}                  
     a^{k-1}_{\type ( ( j_{2} \ n+1)\cdot (j_{1}\ n+1) \cdot \sigma')}   
     &= \sum_{i_1,i_2 \leq \ell(\rho) \atop i_1 \neq i_2} \rho_{i_1} \rho_{i_2} 
       a^{k-1}_{\rho \setminus (\rho_{i_{1}}, \rho_{i_{2}}) \cup (\rho_{i_{1}}+\rho_{i_{2}} + 1)}, 
     \end{align*} 
     where $j_1 \nsim_\sigma j_2$ means that $j_1$ and $j_2$ are not
     in the same cycle of $\sigma$.
\end{itemize}
Finally,
\begin{multline}\label{EqEJ2H}
[\sigma] \E \big(J_{n+1}^{2} h_{k-1}(J_{1},\dots,J_{n+1}) \big) = \\
\sum_{i \leq \ell(\rho)} \rho_i \sum_{r+s = \rho_i+1 \atop r,s \geq 1}
  a^{k-1}_{\rho \setminus (\rho_{i}) \cup (r,s)}
  + \sum_{i_1,i_2 \leq \ell(\rho) \atop i_1 \neq i_2} \rho_{i_1} \rho_{i_2}         
         a^{k-1}_{\rho \setminus (\rho_{i_{1}}, \rho_{i_{2}}) \cup (\rho_{i_{1}}+\rho_{i_{2}} + 1)}.
\end{multline}
Putting together equations~\eqref{EqEJnp1TRecH},~\eqref{EqCoefSigmaEJHk},
\eqref{EqEJnp1Hkn} and~\eqref{EqEJ2H}, we obtain the second part of the theorem.
\end{proof}

\begin{remark}
    This theorem allows to compute inductively the coefficients $a_\rho^k$,
    see \cite[end of page 13]{LassalleJM}.    
\end{remark}

\subsection{New relations}\label{SubsectNewInd}
In this paragraph, we prove new induction relations on the coefficients
$a^k_\rho$, using the same kind of method as above.
\begin{theorem}\label{ThmNewInd}
For any partition $\rho$ and positive integers $k,m$ one has:
\begin{equation}\label{EqNewInd}
 a^{k}_{\rho \cup (m)} = \delta_{m,1} a^k_{\rho} 
 +\sum_{1 \leq i \leq \ell(\rho)} \rho_i
 a^{k-1}_{\rho \setminus (\rho_{i}) \cup (\rho_{i}+m)} 
 + \sum_{r+s=m \atop r,s \geq 1}   a^{k-1}_{\rho \cup (r,s)}.
\end{equation}
\end{theorem}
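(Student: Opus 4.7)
The plan is to mirror the combinatorial method used for Theorem~\ref{ThmLassalle}, but applied to a permutation of type $\rho\cup(m)$ for arbitrary $m\geq 1$, rather than type $\rho\cup(1)$. Set $n=|\rho|$ and start again from the elementary identity
\begin{equation*}
h_k(J_1,\ldots,J_{n+m}) = h_k(J_1,\ldots,J_{n+m-1}) + J_{n+m}\,h_{k-1}(J_1,\ldots,J_{n+m}).
\end{equation*}
I will then extract the coefficient of a specific permutation $\sigma\in S_{n+m}$ of cycle-type $\rho\cup(m)$ chosen so that the cycle of length $m$ is precisely $(n+1,n+2,\ldots,n+m)$ while the remaining cycles (of lengths $\rho_1,\ldots,\rho_{\ell(\rho)}$) are supported on $\{1,\ldots,n\}$. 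Since $h_k(J_1,\ldots,J_{n+m})$ is central, this coefficient equals $a^k_{\rho\cup(m)}$ regardless of the chosen representative.

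For the first summand on the right, $[\sigma]h_k(J_1,\ldots,J_{n+m-1})$ is nonzero only when $\sigma$ fixes $n+m$, which forces $m=1$; in that case $\sigma$ restricts to a type-$\rho$ permutation in $S_n$ and contributes $a^k_\rho$, giving the term $\delta_{m,1}\,a^k_\rho$. For the second summand, expanding $J_{n+m}$ yields
\begin{equation*}
[\sigma]J_{n+m}\,h_{k-1}(J_1,\ldots,J_{n+m}) = \sum_{j<n+m} a^{k-1}_{\type((j\ n+m)\sigma)},
\end{equation*}
and I split the sum according to whether $j$ lies in one of the $\rho$-cycles of $\sigma$ or inside the $m$-cycle itself.

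When $j$ lies in the $i$-th $\rho$-cycle, the transposition $(j\ n+m)$ merges that cycle with the $m$-cycle into a single cycle of length $\rho_i+m$, exactly as in the second case of the previous proof; summing over the $\rho_i$ admissible values of $j$ in each $\rho$-cycle produces $\sum_i \rho_i\,a^{k-1}_{\rho\setminus(\rho_i)\cup(\rho_i+m)}$. When $j$ lies in the $m$-cycle, say $j$ sits at distance $\ell\in\{1,\ldots,m-1\}$ from $n+m$ along the cycle, the standard ``splitting'' effect of multiplying a cycle by a transposition of two of its entries breaks the $m$-cycle into two cycles of lengths $\ell$ and $m-\ell$, so $\type((j\ n+m)\sigma)=\rho\cup(\ell,m-\ell)$; letting $\ell$ range over $\{1,\ldots,m-1\}$ then produces exactly one term for each ordered pair $(r,s)$ with $r+s=m$ and $r,s\geq 1$, contributing $\sum_{r+s=m,\,r,s\geq 1} a^{k-1}_{\rho\cup(r,s)}$. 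The only place where care is needed is this last piece of cycle bookkeeping: one must verify by directly tracking the image of $n+m$ and of $j$ under $(j\ n+m)\sigma$ that the two new cycles have the claimed lengths and that each ordered pair $(r,s)$ arises exactly once. Combining the three contributions gives~\eqref{EqNewInd}.
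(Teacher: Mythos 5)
Your proposal is correct and follows essentially the same route as the paper's own proof: extract the coefficient of a type-$\rho\cup(m)$ permutation whose largest letter sits in the $m$-cycle from the recursion $h_k = h_k(\text{one fewer variable}) + J_{\max}h_{k-1}$, then split the sum over $j$ according to whether $(j\ N)\sigma$ joins two cycles or cuts the $m$-cycle. The only cosmetic differences are your choice of indexing ($S_{|\rho|+m}$ versus the paper's $S_{n+1}$ with $n=|\rho|+m-1$) and your uniform treatment of $m=1$, which the paper instead delegates to the already-proved equation~\eqref{EqLassalle1}.
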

\begin{proof}
    The case $m=1$ corresponds to equation~\eqref{EqLassalle1} and has already
    been proved.\medskip

    Suppose $m>1$.
    Once again, we begin with equation~\eqref{EqRecH} and
    we will look at the coefficient of some permutation $\sigma$ on both sides.
    
    Let $n=|\rho|+m-1$ and $\sigma$ be a permutation in $S_{n+1}$ of type 
    $\rho \cup (m)$ such that $n+1$ is in a cycle of $\sigma$ of length $m$
    (in particular, as $m>1$, $n+1$ is not a fixed point of $\sigma$).
    By definition,
    \begin{equation}
        [\sigma] h_{k}(J_{1},\dots,J_{n+1}) = a^{k}_{\rho \cup (m)}.
        \label{EqCoef231}
    \end{equation}
    Besides, as all permutations in $h_{k}(J_{1},\dots,J_{n})$ fix $n+1$, but 
    not $\sigma$, one has:
    \begin{equation}
        [\sigma] h_{k}(J_{1},\dots,J_{n}) = 0.
        \label{EqCoef232}
    \end{equation}
    We shall now compute
    \begin{align*}
        [\sigma] J_{n+1} h_{k-1}(J_{1},\dots,J_{n+1}) &=
        [\sigma] \sum_{j \leq n} (j\ n+1) h_{k-1}(J_{1},\dots,J_{n+1}) \\
        &= \sum_{j \leq n} [(j\ n+1) \sigma] h_{k-1}(J_{1},\dots,J_{n+1}) \\
        &= \sum_{j \leq n} a^{k-1}_{\type( (j\ n+1) \sigma)}.        
    \end{align*}
    As before, we label the cycles of $\sigma$: the cycle containing $n+1$ gets
    the label $\ell(\rho)+1$, the others are labelled such that the $i$-th cycle
    has length $\rho_i$ (for $1\leq i\leq \ell(\rho)$).
    We distinguish two cases:
    \begin{itemize}
        \item Suppose that $j$ is in the $\ell(\rho)+1$-th cycle of $\sigma$
            (as $n+1$).
            This implies that $j=\sigma^h(n+1)$ for some $h$ between $1$ and
            $m-1$ (as $j \neq n+1$, $h$ can not be equal to $m$).
            Then $(j\ n+1) \sigma$ has the same cycles than $\sigma$ except
            for its $\ell(\rho)+1$-th cycle, as well as two new cycles:
            \[ \big( n+1, \sigma(n+1), \dots, \sigma^{h-1}(n+1) \big)
            \text{ and } \big(j, \sigma(j), \dots, \sigma^{m-h-1}(j) \big).\]
            Thus its cycle-type is ${\rho \cup (h,m-h)}$.
            Exactly one value of $j$ corresponds to each integer $h$
            between $1$ and $m-1$. One has:
            \[
      \sum_{j \leq n \atop j \sim_\sigma n+1} a^{k-1}_{\type( (j\ n+1) \sigma)}
       = \sum_{h=1}^{m-1} a^{k-1}_{\rho \cup (h,m-h)} 
       =  \sum_{r+s=m \atop r,s \geq 1}   a^{k-1}_{\rho \cup (r,s)}.
           \]
        \item Otherwise, $j$ is in the $i$-th cycle of $\sigma$ for some 
            $i \leq \ell(\rho)$ (in particular, it is not in the same cycle
            as $n+1$).
            In this case, $(j\ n+1) \sigma$ has the same cycles than $\sigma$
            except for its $i$-th and $\ell(\rho)+1$-th cycles,
            as well as one new cycle:
            \[ \big( j, \sigma(j), \dots, \sigma^{\rho_i-1}(j),
                n+1, \sigma(n+1),\dots, \sigma^{m-1}(n+1) \big). \]
            Thus its cycle-type is $\rho \setminus (\rho_{i}) \cup (\rho_{i}+m)$.
            As there are $\rho_i$ elements in the $i$-th cycle of $\sigma$ for each
            $i$, one obtains:
            \begin{align*}   
      \sum_{j \leq n \atop j \nsim_\sigma n+1} a^{k-1}_{\type( (j\ n+1) \sigma)}
      &= \sum_{1 \leq i \leq \ell(\rho)} \rho_i
       a^{k-1}_{\rho \setminus (\rho_{i}) \cup (\rho_{i}+m)}.
   \end{align*}
    \end{itemize}
   Finally,
   \begin{equation}
       [\sigma] J_{n+1} h_{k-1}(J_{1},\dots,J_{n+1}) =
       \sum_{r+s=m \atop r,s \geq 1}   a^{k-1}_{\rho \cup (r,s)} \\
       + \sum_{1 \leq i \leq \ell(\rho)} \rho_i                   
              a^{k-1}_{\rho \setminus (\rho_{i}) \cup (\rho_{i}+m)}.
       \label{EqCoef233}
   \end{equation}
   The theorem follows from equations~\eqref{EqRecH},~\eqref{EqCoef231},
  ~\eqref{EqCoef232} and~\eqref{EqCoef233}.
\end{proof}

\begin{remark}
    This type of case distinctions, depending on whether some elements are in
    the same cycle or not, is quite classical and leads often to
    the same kind of induction relations, called \emph{cut-and-join} equations:
    see for instance \cite{GouldenJacksonCutAndJoin}.
\end{remark}
\begin{remark}
    This theorem implies Theorem~\ref{ThmLassalle}.
    Indeed, equation~\eqref{EqLassalle2} can be written as a linear combination
    of specializations of equation~\eqref{EqNewInd}, but the converse is not true.
\end{remark}
\begin{remark}
    Our new induction relation allows to compute $a_\rho^k$ by induction
    over $|\rho|$ and $k$ in several different ways.
    Indeed, a given partition $\lambda$ can be written as $\rho \cup (m)$ in
    several different ways.
    It is not evident {\it a priori} that the final result does not depend on
    this choice.
    This relies on the initial conditions:
    \[a^1_\rho=\begin{cases}
        1 \text{ if }\rho=2 1^i\text{ for some $i$;}\\
        0 \text{ else.}
    \end{cases} \]
\end{remark}

\subsection{Taking care of the dependence in $n$}
As mentioned by Lassalle \cite[paragraph 2.7]{LassalleJM},
the coefficients $a^k_{\rho \cup 1^{n- |\rho|}}$, seen as functions of $n$, have a very nice structure.
More precisely, let us define $c^{k}_{\lambda}$, where $\lambda$ is a partition,
by induction on $|\lambda|$ by the formula:
\begin{equation}\label{EqLinkAC}
 a^k_{\rho} = \sum_{i = 0}^{m_{1}(\rho)} c^k_{\bar{\rho} \cup 1^i} \binom{m_{1}(\rho)}{i},
 \end{equation}
where $m_{1}(\rho)$ is the number of parts equal to $1$ in $\rho$ and $\bar{\rho}$ is obtained from $\rho$ by erasing its parts equal to $1$.

The interesting fact now is that $c^k_{\rho}$ is equal to $0$
as soon as $|\rho| - \ell(\rho) + m_{1}(\rho)$ is bigger than $k$,
while, for a given $k$, one has infinitely many non-zero $a^k_{\rho}$
(this fact is explained in paragraph \ref{SubsectPartialJM}).
As a consequence, coefficients $c$ are convenient to compute simultaneously
the class expansion of $h_k(J_1,\dots,J_n)$ for all positive integers $n$
(the integer $k$ being fixed):
see Example~\ref{ExCalculC} at the end of this paragraph.

Using equation~\eqref{EqLinkAC}, one can translate Theorems~\ref{ThmLassalle} and~\ref{ThmNewInd} into relations over the $c$'s, but it is rather technical (see \cite[section 12]{LassalleJM}).
We prefer here to explain the combinatorial meaning of the $c$'s and 
derive directly relations over the $c$'s using this interpretation.

\subsubsection{Algebra of partial permutations}
A good tool for that are the partial permutations introduced by Ivanov and Kerov in \cite{IvanovKerovPartialPermutations}.
Let $\B_{\infty}$ be the following $\ring$-algebra:
\begin{itemize}
  \item  A partial permutation is a couple $(d,\sigma)$
      where $d$ is a finite set of positive integers and
      $\sigma$ a permutation of $d$.
    As a $\ring$-module, $\B_{\infty}$ is the set of infinite linear
      combinations of partial permutations.
  \item the product on partial permutations is given by:
      \begin{equation}\label{EqProdPartialPerm}
          (d,\sigma) \cdot (d',\sigma') = 
          (d \cup d', \tilde{\sigma} \cdot \tilde{\sigma'}), \end{equation}
  where $\tilde{\sigma}$ (resp. $\tilde{\sigma'}$) is the canonical
  continuation of $\sigma$ (resp. $\sigma'$) to $d \cup d'$
  (\textit{i.e.} we add fixed points,
  we will use this notation throughout the paper).
  It extends to $\B_{\infty}$ by biliearity:
  \[\left( \sum_{(d,\sigma)} c_{d,\sigma} (d,\sigma) \right) \cdot
  \left( \sum_{(d',\sigma')} c_{d',\sigma'} (d',\sigma') \right)
  = \! \sum_{(d,\sigma), (d',\sigma')} \! c_{d,\sigma} c_{d',\sigma'}
    (d,\sigma) \cdot (d',\sigma').\]
    It is easy to see that in the formula above, only a finite number of term 
    can contribute to the coefficient of a given partial permutation 
    $(d'',\sigma'')$ (indeed, the indices of such terms must fulfill
    $d,d' \subset d''$).
    Therefore the right-hand side is a well-defined element of $\B_\infty$.
\end{itemize}
The infinite symmetric group $S_{\infty}$ acts naturally on $\B_{\infty}$: if $\tau$ belong to $S_{\infty}$, that is $\tau$ is a permutation of $\N^{\star}$ with finite support, we define
\[ \tau \bullet (d,\sigma) = (\tau(d), \tau \sigma \tau^{-1}).\]
The invariants by the action of $S_{\infty}$ form a subalgebra $\A_{\infty}$ of $\B_{\infty}$.
As explained in \cite[§ 6]{IvanovKerovPartialPermutations}, a basis of this subalgebra is
\[ \big(\pcl_{\lambda}\big)_{\lambda \text{ partition}} \text{ where }\pcl_{\lambda}= \!\! \sum_{d \subset \N^\star, \ |d| = |\lambda| \atop \sigma \in S_{d},\ \text{ cycle-type}(\sigma)=\lambda} \!\! (d,\sigma).\]
The nice property of this construction is that, for each $n$,
there exists a morphism $\varphi_{n}$ from $\B_{\infty}$ to 
the symmetric group algebra $\ring[S_{n}]$ defined by:
\[ \varphi_{n}(d,\sigma) = \left\{
\begin{array}{l}
 \tilde{\sigma} \text{ if }d \subset \{1,\dots,n\} ;\\
 0 \text{ else.}
\end{array} \right. \]
These morphisms restrict to morphisms $\A_{\infty} \to Z(\ring[S_{n}])$.
The image of vectors of the basis is given by \cite[equation (4.3)]{IvanovKerovPartialPermutations}:
\[ \varphi_{n}(\pcl_{\lambda}) =
\binom{n -|\lambda|+m_{1}(\lambda)}{m_{1}(\lambda)} 
\cl_{\lambda \cup 1^{n-|\lambda|}}. \]
We shall need a last property of the algebra $\B_\infty$.
Let us define, for a partial permutation $(d,\sigma)$ its degree to be
\[\deg(d,\sigma) = |d| - \#\text{ cycles of } \sigma +
    \#\text{ fixed points of } \sigma. \]
We consider the subspace $(\B_\infty)_{\leq \delta}$ to be the set of
infinite linear combinations of partial permutations of degree smaller
or equal to $\delta$.
\begin{lemma}
    The decomposition $\displaystyle
    \B_\infty= \bigcup_{\delta \geq 1} (\B_\infty)_{\leq \delta}$
    defines an algebra filtration.
\end{lemma}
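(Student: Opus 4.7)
The decomposition $\B_\infty=\bigcup_{\delta\geq 1}(\B_\infty)_{\leq\delta}$ is manifestly an increasing family of subspaces whose union is $\B_\infty$, so the entire content of the lemma is the multiplicativity $(\B_\infty)_{\leq\delta}\cdot(\B_\infty)_{\leq\delta'}\subseteq(\B_\infty)_{\leq\delta+\delta'}$. By bilinearity of the product, this reduces to the basis-level inequality
\[\deg\bigl((d_1,\sigma_1)\cdot(d_2,\sigma_2)\bigr)\;\leq\;\deg(d_1,\sigma_1)+\deg(d_2,\sigma_2)\]
for arbitrary partial permutations, and I will focus on proving this. A convenient preliminary is the reformulation $\deg(d,\sigma)=|d|-c'(\sigma)$, where $c'(\sigma)$ is the number of non-trivial cycles of $\sigma$; this agrees with the definition from the excerpt since the total cycle count equals $c'+f$.

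My plan is to factor $(d_1,\sigma_1)$ into degree-one elementary pieces and then iterate. I would pick a minimal transposition factorization $\sigma_1=\tau_1\cdots\tau_k$ in $S_{\supp(\sigma_1)}$, so that $k=|\supp(\sigma_1)|-c'(\sigma_1)$; direct use of the product law \eqref{EqProdPartialPerm} of $\B_\infty$ then yields the identity
\[(d_1,\sigma_1)\;=\;\prod_{j=1}^{k}\bigl(\supp(\tau_j),\tau_j\bigr)\cdot\!\!\prod_{y\in d_1\setminus\supp(\sigma_1)}\!\!\bigl(\{y\},\id\bigr),\]
in which every factor is either a partial transposition or a singleton identity, hence of degree $1$, and the total number of factors is exactly $\deg(d_1,\sigma_1)$. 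It therefore suffices to establish the single-step bound $\deg(P\cdot(d,\sigma))\leq 1+\deg(d,\sigma)$ whenever $P$ is one of these elementary partial permutations, and then telescope over the factors of $(d_1,\sigma_1)$.

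The real work, and the only obstacle, is this single-step case analysis. For $P=(\{y\},\id)$ the bound is trivial: either $y\in d$ and the product equals $(d,\sigma)$, or $y\notin d$ and one adds a fixed point, so that $|d|$ increases by one while $c'$ stays the same. For $P=(\{a,b\},(a\ b))$ the cycle structure of $\widetilde{(a\ b)}\,\tilde\sigma$ differs from that of $\tilde\sigma$ either by a cycle split (when $a,b$ lie in the same cycle of $\tilde\sigma$) or by a cycle merge (otherwise). Running through the subcases according to whether $a,b$ lie in $d$, are fixed points of $\sigma$, or belong to non-trivial cycles of $\sigma$, I would check directly in each situation that $\Delta|d|-\Delta c'\leq 1$ (often strictly less, the extreme cases being $a,b\notin d$ and $(a\ b)$ being a cycle of $\sigma$, where equality occurs). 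Applied iteratively to the $\deg(d_1,\sigma_1)$ elementary factors of $(d_1,\sigma_1)$, this yields the desired filtration bound.
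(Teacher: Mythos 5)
Your argument is correct, and it takes a genuinely different route from the paper's. You observe that $\deg(d,\sigma)=|d|-c'(\sigma)$ with $c'$ the number of non-trivial cycles, factor the left-hand partial permutation into exactly $\deg(d_1,\sigma_1)$ degree-one pieces (partial transpositions coming from a minimal factorization of $\sigma_1$ on its support, plus singleton identities for $d_1\setminus\supp(\sigma_1)$), and reduce subadditivity to the single-step bound $\deg\bigl(P\cdot(d,\sigma)\bigr)\leq 1+\deg(d,\sigma)$. That bound is a routine cut-and-join check on how one transposition splits or merges cycles, and all subcases do close: equality occurs exactly when $a,b\notin d$, when two non-trivial cycles are merged, or when $(a\ b)$ is a $2$-cycle of $\sigma$; every other case gives a strict decrease of the increment. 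The paper argues instead by induction on the number of fixed points of the \emph{product}: the base case invokes the known subadditivity of $\deg'(d,\sigma)=|d|-\#\,\text{cycles}$ together with $\deg'\leq\deg$, and the inductive step uses a fixed-point-erasing operator $F_i$ which is a quasi-morphism on the relevant product, with a delicate extra case when $i$ lies in $2$-cycles of both factors. Your route is more elementary and self-contained --- it does not use the filtration property of $\deg'$ as a black box, and as a byproduct it identifies $\deg$ with the minimal number of degree-one factors needed to produce a given partial permutation --- at the price of a longer but entirely mechanical case analysis. If you write it up, make explicit the associativity of the product~\eqref{EqProdPartialPerm}, which you use silently when telescoping the single-step bound across the factors.
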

\begin{remark}
    Consider $\deg'$ defined by 
\[\deg'(d,\sigma) = |d| - \#\text{ cycles of } \sigma,\]
    $\deg'$ is the minimal number of factors needed to write $\sigma$
    (or $\tilde{\sigma}$) as a product of transpositions.
    It is known to define a filtration of $\ring[S_n]$ and hence of $\B_\infty$
    (see \cite[equation (10.3)]{IvanovKerovPartialPermutations}).
\end{remark}
\begin{proof}
    We have to prove that if $(\pi,f)=(\sigma,d) \cdot (\tau,e)$,
    then \[\deg(\pi,f)\leq \deg(\sigma,d) + \deg(\tau,e).\]
    We make an induction on the number $m_1$ of fixed points of $\pi$.

    If $m_1=0$, then
    \[ \deg(\pi,f) = \deg'(\pi,f) \leq \deg'(\sigma,d) + \deg'(\tau,e)
        \leq \deg(\sigma,d) + \deg(\tau,e).\]

    Otherwise, let $i \in f$ be a fixed point of $\pi$.
    We consider the linear operator $F_i$
    \[F_i : \begin{array}{rcl}
        \B_\infty &\longrightarrow & \B_\infty \\
        (\sigma,d) & \longmapsto & \begin{cases}
            (\sigma_{\backslash i},d \backslash \{i\}) \text{ if } i \in d\\
            (\sigma,d) \text{ else,}
        \end{cases}
    \end{array}\]
    where $\sigma_{\backslash i}$ is the permutation obtained by erasing $i$
    in the expression of $\sigma$ as a product of cycles of disjoint supports.
    Equivalenty, by definition, $\sigma_{\backslash i}(j)=\sigma(j)$ if $j \neq \sigma^{-1}(i)$
    and $\sigma_{\backslash i}(\sigma^{-1}(i))=\sigma(i)$.
    It is immediate to check that $\deg(F_i(\sigma,d))=\deg(\sigma,d) - 1$ unless $i$ is in a cycle of 
    length $2$ in $\sigma$, in which case $\deg(F_i(\sigma,d))=\deg(\sigma,d)$.

    Note that $F_i$ is \emph{not} an algebra morphism.
    However, as $i  \in \Fix(\pi)$, one has:
    \begin{equation}\label{EqFiQuasiMorphism}
    F_i(\pi,f)=F_i(\sigma,d) \cdot F_i(\tau,e).
\end{equation}
    Let us explain why this holds. 
    First, it is obvious that 
    \[(d \backslash \{i\}) \cup (e \backslash \{i\})= (d \cup e) \backslash \{i\} = f \backslash \{i\}. \]
    Then, if $j \neq i,\tau^{-1}(i)$, then $\tau_{\backslash i}(j)=\tau(j)$ and thus
    \[\sigma_{\backslash i}\big(\tau_{\backslash i}(j)\big)
    =\sigma_{\backslash i}\big(\tau(j)\big)
    =\sigma\big(\tau(j)\big) = \pi(j)
    = \pi_{\backslash i}(j)\]
    because $\sigma(\tau(j))=\pi(j)$ is different from $i$ (indeed, $\pi(i)=i$ and $j \neq i$).
    Finally, one only has to check that:
    \[\sigma_{\backslash i} \big(\tau_{\backslash i} (\tau^{-1}(i)) \big)
    = \pi_{\backslash i} (\tau^{-1}(i)).\]
    But $\tau_{\backslash i} (\tau^{-1}(i)) =\tau(i)$ and, as 
    $\sigma(\tau(i))=\pi(i)=i$, the left-hand side is equal to
    $\sigma_{\backslash i}(\tau(i))=\sigma(i)$.
    But, as $i$ is a fixed point of $\pi$, the permutation $\pi_{\backslash i}$
    is simply $\pi |_{f \backslash \{i\}}$ and thus the right-hand side is
    equal to $\pi(\tau^{-1}(i))=\sigma(i)$.
    This ends the proof of equation~\eqref{EqFiQuasiMorphism}.

    As $F_i(\pi,f)$ has one less fixed point than $\pi,f$, we can apply the
    induction hypothesis and one has:
    \[\deg(F_i(\pi,f)) \leq \deg(F_i(\sigma,d)) + \deg(F_i(\tau,e)).\]
    As mentioned above:
    \begin{align*}
        \deg(F_i(\pi,f)) &= \deg(\pi,f) +1; \\
        \deg(F_i(\sigma,d)) &= \deg(\sigma,d) +1 -\delta_1; \\
        \deg(F_i(\tau,e)) = \deg(\tau,e) +1 -\delta_2,
    \end{align*}
    where $\delta_1$ (resp. $\delta_2$) is equal to $1$ 
    if $i$ is in a cycle of length $2$ in $\sigma$ (resp. $\tau$)
    and $0$ else.

    If one of the $\delta$'s is equal to $0$, one has
    \begin{multline*}
        \deg\left( (\pi,f) \right) = \deg(F_i(\pi,f)) +1
            \leq \deg(F_i(\sigma,d)) + \deg(F_i(\tau,e)) +1\\
        \leq \deg(\sigma,d) + \deg(\tau,e)
    \end{multline*}
    and the proof is over in this case.
    So the only case we have to study is when $i$ is in cycles of length $2$
    in $\sigma$ and $\tau$.
    Of course, as $\sigma(\tau(i))=i$, both $\sigma(i)$ and $\tau(i)$ are equal
    to the same number $j$.
    In this case, we have:
    \begin{align*}
        F_j(F_i(\pi,f))&=F_j(F_i(\sigma,d)) \cdot F_j(F_i(\tau,e));\\
        \deg(\pi,f)&= \deg\big(F_j(F_i(\pi,f)))+2; \\
        \deg(\sigma,d)&= \deg\big(F_j(F_i(\sigma,d)))+1; \\
        \deg(\tau,e)&= \deg\big(F_j(F_i(\tau,e)))+1
    \end{align*}
    and we can conclude by induction.
\end{proof}
\begin{remark}
    In their paper, V. Ivanov and S. Kerov considered a large family of
    filtrations on $\B_\infty$ 
    \cite[Proposition 10.3]{IvanovKerovPartialPermutations},
    but it does not contain this one.
\end{remark}

\subsubsection{Complete functions in partial Jucys-Murphy elements}\label{SubsectPartialJM}
It has been observed in \cite[Section 2]{FerayPartialJM} that,
if we define natural analogs of Jucys-Murphy elements in $\B_{\infty}$ by:
\[X_{i} = \sum_{j < i} \big(\{j,\ i\},\ (j\ i)\big) \quad \text{for }i \geq 1,\]
\begin{itemize}
  \item then they still commute with each other;
  \item besides, the evaluation $F(X_{1}, X_{2}, X_{3}, \dots)$ of any symmetric
      function $F$ in the infinite sequence of partial Jucys-Murphy elements
      is well-defined and lies in $\A_{\infty}$.
\end{itemize}
Therefore there exist coefficients $c^k_{\lambda}$ such that
\[ h_{k}(X_{1},X_{2},X_{3}, \dots) = \sum_{\lambda} c^k_{\lambda} \pcl_{\lambda}. \]
In other terms, $c^k_{\lambda}$ is the coefficient of any partial permutation
$(d,\sigma)$ with $|d|=|\lambda|$ and $\sigma$ of cycle-type $\lambda$ in
$h_{k}(X_{1},X_{2},X_{3}, \dots)$.
Applying $\varphi_{n}$, one obtains:
\begin{align*}
    h_{k}(J_1,\dots,J_n) &= \sum_{\lambda} c^k_{\lambda}
\binom{n -|\lambda|+m_{1}(\lambda)}{m_{1}(\lambda)} 
\cl_{\lambda \cup 1^{n-|\lambda|}} \\
&= \sum_{\rho \vdash n} \left(
\sum_{\lambda \text{ such that} \atop \lambda \cup 1^{n-|\lambda|} = \rho}
c^k_{\lambda} \binom{n -|\lambda|+m_{1}(\lambda)}{m_{1}(\lambda)} \right)
\cl_\rho \\
&= \sum_{\rho \vdash n} \left( \sum_{i=1}^{m_1(\rho)} c^k_{\bar{\rho} \cup 1^i}
\binom{m_{1}(\rho)}{i} \right) \cl_\rho
\end{align*}
Therefore, the numbers $c^k_\lambda$ fulfill equation~\eqref{EqLinkAC} and
this definition is equivalent to the one given at the beginning of the
subsection.
Note that with this construction, it is obvious that the $c$'s are non-negative integers (fact which was observed numerically by Lassalle, private communication).

The fact that $c^k_{\rho}$ is equal to $0$ as soon as
$|\rho| - \ell(\rho) + m_{1}(\rho)$ is bigger than $k$ is also natural
because each $X_i$ is in $(\B_\infty)_{\leq 1}$ and
hence $h_{k}(X_{1},X_{2},X_{3}, \dots)$ lies in $(\B_\infty)_{\leq k}$.
This can of course be generalized to any symmetric function.
In terms of $a$'s, using equation~\eqref{EqLinkAC},
this implies the following property:

\begin{proposition}
    Let $\rho$ be a partition and $F$ a symmetric function of degree $k$.
    The function $t \mapsto a^F_{\rho \cup 1^t}$ is a polynomial in $t$ of degree
    smaller or equal to \[k-(|\rho|-\ell(\rho)).\]
\end{proposition}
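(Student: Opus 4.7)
The plan is to express $a^F_{\rho \cup 1^t}$ via the $c$'s and then exploit the vanishing of $c^F_\lambda$ as soon as $|\lambda| - \ell(\lambda) + m_1(\lambda) > k$, which follows (as the paper notes) from each $X_i$ lying in $(\B_\infty)_{\leq 1}$ together with the filtration lemma proved above.

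First I would write $\rho = \bar{\rho} \cup 1^{\mu}$ with $\mu = m_1(\rho)$, so that $\rho \cup 1^t = \bar{\rho} \cup 1^{\mu+t}$ and $m_1(\rho \cup 1^t) = \mu + t$. Applying equation~\eqref{EqLinkAC} to $\rho \cup 1^t$ yields
\[
a^F_{\rho \cup 1^t} \;=\; \sum_{i=0}^{\mu+t} c^F_{\bar{\rho} \cup 1^i} \binom{\mu+t}{i}.
\]
Now since $\bar{\rho}$ has no part equal to $1$, for the partition $\bar{\rho} \cup 1^i$ one computes
\[
|\bar{\rho}\cup 1^i| - \ell(\bar{\rho}\cup 1^i) + m_1(\bar{\rho}\cup 1^i)
= (|\bar{\rho}|+i)-(\ell(\bar{\rho})+i)+i
= |\bar{\rho}|-\ell(\bar{\rho})+i.
\]
By the filtration property, $c^F_{\bar{\rho}\cup 1^i}=0$ as soon as this quantity exceeds $k$, i.e.\ as soon as $i > k - (|\bar{\rho}|-\ell(\bar{\rho}))$. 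Hence the sum truncates:
\[
a^F_{\rho \cup 1^t} \;=\; \sum_{i=0}^{N} c^F_{\bar{\rho} \cup 1^i} \binom{\mu+t}{i},
\qquad N := k - \bigl(|\bar{\rho}|-\ell(\bar{\rho})\bigr).
\]

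To finish, I would observe that erasing parts equal to $1$ preserves $|\cdot| - \ell(\cdot)$, so $|\bar{\rho}| - \ell(\bar{\rho}) = |\rho| - \ell(\rho)$, giving $N = k - (|\rho|-\ell(\rho))$. Since each $\binom{\mu+t}{i}$ is a polynomial in $t$ of degree exactly $i$, the truncated sum is a polynomial in $t$ of degree at most $N$, which is exactly the claimed bound. There is no real obstacle here beyond correctly unwinding the combinatorics of $m_1$ under the $\cup 1^t$ operation; the whole argument is a direct consequence of the filtration lemma and of the definition~\eqref{EqLinkAC}.
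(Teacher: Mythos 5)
Your proof is correct and takes essentially the same route as the paper: the proposition is deduced there, too, from the vanishing of $c^F_\lambda$ whenever $|\lambda|-\ell(\lambda)+m_1(\lambda)>k$ (a consequence of the filtration lemma applied to $F(X_1,X_2,\dots)\in(\B_\infty)_{\leq k}$) combined with equation~\eqref{EqLinkAC}. You have simply made explicit the bookkeeping that the paper leaves implicit.
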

The fact that this function is a polynomial is already known 
\cite[Theorem 4.4]{MatsumotoNovakMonomialJM}, but not the bound on the degree.

Besides, we can obtain induction relations on the $c$'s
with the same kind of argument we used for the $a$'s:

\begin{theorem}\label{ThmIndC}
For any partition $\rho$ and positive integers $m$ and $k$, one has
\begin{align*}
 c^k_{\rho \cup 1} &=  \sum_{i} \rho_i c^{k-1}_{\rho \setminus (\rho_i) \cup (\rho_i+1)} ;\\
 c^k_{\rho \cup 2} &=\sum_{i} \rho_i c^{k-1}_{\rho \setminus (\rho_i) \cup (\rho_i+2)}+ c^{k-1}_{\rho \cup (1,1)} + 2 c^{k-1}_{\rho \cup (1)} + c^{k-1}_{\rho} ;\\
 c^k_{\rho \cup m} &=\sum_{i} \rho_i c^{k-1}_{\rho \setminus (\rho_i) \cup (\rho_i+m)}+\sum_{r+s=m \atop r,s \geq 1} c_{\rho \cup (r,s)} + 2 c^{k-1}_{\rho \cup (m-1)} \text{ if }m \geq 3.
\end{align*}
\end{theorem}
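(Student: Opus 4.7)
The plan is to adapt the argument used in the proof of Theorem~\ref{ThmNewInd} to the algebra of partial permutations $\B_\infty$. Set $N = |\rho|+m$, fix $d = \{1,\dots,N\}$, and choose a permutation $\sigma$ of $d$ of cycle-type $\rho \cup (m)$ with $N$ a fixed point when $m=1$ and otherwise $N$ in the unique $m$-cycle. By definition, $c^k_{\rho \cup (m)}$ equals the coefficient of $(d,\sigma)$ in $h_k(X_1, X_2, \dots)$. Since $X_i$ has $i$ in its support for every $i \geq 2$, any contribution to this coefficient comes from products of $X_i$'s with $i \leq N$; one may therefore work with the truncation $h_k(X_1,\dots,X_N)$ and use the standard recursion
\[h_k(X_1,\dots,X_N) = h_k(X_1,\dots,X_{N-1}) + X_N \, h_{k-1}(X_1,\dots,X_N).\]

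Extract the coefficient of $(d,\sigma)$ on both sides. The first summand contributes zero, since every term in it has support contained in $\{1,\dots,N-1\} \subsetneq d$. For the second summand, expand $X_N = \sum_{j<N} (\{j,N\},(j\ N))$. The product rule~\eqref{EqProdPartialPerm} shows that a partial permutation $(d',\tau)$ appearing in $h_{k-1}(X_1,\dots,X_N)$ contributes to the coefficient of $(d,\sigma)$ if and only if $d' \supset d \setminus \{j,N\}$, $d'$ contains the non-fixed points of $(j\ N)\sigma$, and $\tau = (j\ N)\sigma|_{d'}$; the corresponding coefficient is then $c^{k-1}_{\type(\tau)}$. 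Split over $j$: if $j$ lies in a cycle of $\sigma$ of length $\rho_i$ different from the $m$-cycle, then $(j\ N)\sigma$ merges the two cycles into one of length $\rho_i+m$ with no new fixed points, forcing $d'=d$ and yielding $\sum_i \rho_i c^{k-1}_{\rho \setminus (\rho_i) \cup (\rho_i+m)}$ after summation. If $j=\sigma^h(N)$ lies in the $m$-cycle with $2 \le h \le m-2$, the $m$-cycle splits into two cycles of lengths $h$ and $m-h$; again $d'=d$ and the contribution is $c^{k-1}_{\rho \cup (h,m-h)}$.

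The delicate cases are the boundary values $h=1$ and $h=m-1$: exactly one of $\{j,N\}$ becomes a fixed point of $(j\ N)\sigma$, so $d'$ may either keep or drop it. Each such $j$ therefore contributes \emph{two} terms, $c^{k-1}_{\rho \cup (1,m-1)}$ and $c^{k-1}_{\rho \cup (m-1)}$. Collecting, the total contribution of the $j$'s lying in the $m$-cycle (for $m \ge 3$) is $\sum_{r+s=m,\, r,s\ge 1} c^{k-1}_{\rho \cup (r,s)} + 2\,c^{k-1}_{\rho \cup (m-1)}$, which, combined with the non-$m$-cycle contribution, gives the third formula. For $m=1$ the $m$-cycle is trivial and there is no such contribution, so the first formula follows. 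For $m=2$ the two boundary values coincide ($h=1=m-1$), so both $j$ and $N$ become fixed points of $(j\ N)\sigma$ simultaneously; now $d'$ has four independent choices (keep or drop each of $j$ and $N$), producing the four terms $c^{k-1}_{\rho \cup (1,1)}$, $c^{k-1}_{\rho\cup (1)}$ (twice), and $c^{k-1}_{\rho}$ that appear in the second formula.

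The main new subtlety compared to the proof of Theorem~\ref{ThmNewInd} is precisely this bookkeeping of admissible sets $d'$ whenever $(j\ N)\sigma$ produces a new fixed point: in the symmetric-group setting the underlying set was rigidly fixed by the operator $\E$, whereas here each new fixed point doubles the number of valid $d'$, and the $d'$ that drops this fixed point contributes a term with strictly smaller cycle-type. This is the mechanism that creates the extra $2\,c^{k-1}_{\rho \cup (m-1)}$ for $m \ge 3$ and the more intricate shape of the $m=2$ relation.
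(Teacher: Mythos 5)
Your proof is correct and follows essentially the same route as the paper's: fixing a partial permutation on $d=\{1,\dots,N\}$ with $N$ in the $m$-cycle, reducing to the coefficient of $(d,\sigma)$ in $X_N\,h_{k-1}(X_1,\dots,X_N)$, and then tracking which underlying sets $d'$ are admissible when $(j\ N)\sigma$ acquires new fixed points. The case analysis (merge, interior split, boundary splits $h=1$ and $h=m-1$, and the degenerate $m=2$ case with four choices of $d'$) matches the paper's argument exactly.
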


\begin{proof}
Let $n+1=|\rho| + m$ and fix a partial permutation $(d,\sigma)$ with:
\begin{itemize}
  \item $d = \{1,\dots,n+1\}$;
  \item $\sigma$ has cycle-type $\rho \cup (m)$ and $n+1$ is in a cycle of length $m$.
\end{itemize}
Let us look at the coefficient $c^k_{\rho \cup m}$ of $(d,\sigma)$ in $h_{k}(X_{1}, X_{2},\dots)$.
As $n+1$ is the biggest element in $d$, it implies that every monomials in the $X_{i}$'s contributing to the coefficient of $(d,\sigma)$ contains no $X_{i}$ with $i>n+1$ and contains at least one $X_{n+1}$. Thus:
\begin{align*}
 c^k_{\rho \cup m} &= [(d,\sigma)] h_{k}(X_{1}, X_{2},\dots) = [(d,\sigma)] X_{n+1} h_{k-1}(X_{1},\dots,X_{n+1}) ; \\ 
 &=  [(d,\sigma)] \sum_{j < n+1}
 \sum_{\nu} \sum_{(d',\tau),\  |d'| = |\nu| \atop \text{cycle-type}(\tau)=\nu}
 c^{k-1}_{\nu} \cdot (d' \cup \{j, n+1\}, (j\ n+1) \tilde{\tau});\\
 &= \sum_{j < n+1}
 \sum_{(d',\tau) \atop \text{cond. }(1)} c^{k-1}_{\type(\tau)},
\end{align*}
where condition $(1)$ is the equality
$(d' \cup \{j, n+1\}, (j\ n+1) \tilde{\tau})=(d,\sigma)$.
For a given integer $j$ between $1$ and $n$, we have to determine which sets
$d'$ and permutations $\tau \in S_{d'}$ fulfill
$d' \cup \{j, n+1\} = d$ and $(j\ n+1) \tilde{\tau} = \sigma$.
Of course, one must have $\tilde{\tau}=(j\ n+1)\sigma$.
As in the previous paragraphs, we make a case distinction:
\begin{itemize}
    \item If $j$ is not in the same cycle of $\sigma$ as $n+1$,
        then they are in the same cycle of $\tilde{\tau}$.
        In particular, neither $j$ nor $n+1$ are fixed points of $\tilde{\tau}$,
        so both belong to $d'$.
        Therefore, necessarily, $d'=d$.
        The discussion on the possible cycle-types of $\tau=\tilde{\tau}$
        is exactly the same than in paragraph~\ref{SubsectNewInd} and one has:
     \[ \sum_{j < n+1 \atop j \nsim n+1} \sum_{(d',\tau) \atop \text{cond. }(1)}
        c^{k-1}_{\type(\tau)} 
      = \sum_{1 \leq i \leq \ell(\rho)} \rho_i
       c^{k-1}_{\rho \setminus (\rho_{i}) \cup (\rho_{i}+m)}.\]
   \item If $j$ is in the same cycle of $\sigma$ as $n+1$ (this implies $m>1$),
       we write $j=\sigma^h(n+1)$.
       If $d'=d$, then $\tau=\tilde{\tau}=(j\ n+1)\sigma$ and its possible
       cycle-types has been discussed in paragraph~\ref{SubsectNewInd}, so one
       has:
       \[ \sum_{j < n+1 \atop j \sim n+1}
       \sum_{(d',\tau) \atop \text{cond. $(1)$ and }d'=d}
       c^{k-1}_{\type(\tau)} 
        =  \sum_{r+s=m \atop r,s \geq 1} c^{k-1}_{\rho \cup (r,s)}.\]
        But, in this case, $d'$ is not necessarily equal to $d$.
        Indeed, when $h=1$, the permutation $\tilde{\tau}$ has $n+1$ as a fixed
        point.
        If $m>2$, $j$ can not be a fixed point in this case so $j \in d'$.
        Therefore $d'=d$ or $d'=d \backslash \{n+1\}$.
      In the last case, $\tau$ is a permutation of cycle-type $\rho \cup (m-1)$.
      A similar phenomenon happens when $h=m-1$: $j$ is a fixed point of
      $\tilde{\tau}$, but not $n+1$, so $d'$ can be equal to 
      $d \backslash \{j\}$ and the corresponding permutation $\tau$ has 
      cycle-type $\rho \cup (m-1)$.
      Therefore, if $m>2$, one has:
      \[ \sum_{j < n+1 \atop j \sim n+1}                        
             \sum_{(d',\tau) \atop \text{cond. $(1)$}}
             c^{k-1}_{\type(\tau)}                          
                =  \sum_{r+s=m \atop r,s \geq 1} c^{k-1}_{\rho \cup (r,s)}
                + 2 c^{k-1}_{\rho \cup (m-1)}.\]
      If $m=2$, the only possible value of $j$ is $\sigma(n+1)$ and, in this
      case, $\tilde{\tau}$ fixes both $j$ and $n+1$.
      Therefore $d'$ can be equal either to $d$, $d \backslash \{n+1\}$,
      $d \backslash \{j\}$ or $d \backslash \{j,n+1\}$.
      It is easy to see that the cycle-types of the corresponding permutations
      $\tau$ are respectively $\rho \cup (1,1)$, $\rho \cup (1)$,
      $\rho \cup (1)$ and $\rho$. Thus, for $m=2$,
      \[ \sum_{j < n+1 \atop j \sim n+1}                         
               \sum_{(d',\tau) \atop \text{cond. $(1)$}}           
                        c^{k-1}_{\type(\tau)}=
     c^{k-1}_{\rho \cup (1,1)} + 2 c^{k-1}_{\rho \cup (1)} + c^{k-1}_{\rho}.\]
\end{itemize}
Summing the different contributions in the different cases,
we obtain our theorem.
\end{proof}

\begin{example}\label{ExCalculC}
Here are the non-zero values of $c^k_\rho$ for small values of $k$ ($k \leq 3$).
It is immediate that $c^1_{(2)}$ is equal to $1$, while all other $c^1_\mu$ are $0$.
Then Theorem~\ref{ThmIndC} allows to compute:
\begin{align*}
c^2_{(1,1)}&= 1\cdot c^1_{(2)}=1 ;\\
c^2_{(2,2)}&= 2 c^1_{(4)} + c^1_{(2,1,1)} + 2 c^1_{(2,1)} + c^1_{(2)} = 1 ;\\
c^2_{(3)} &= 2 c^1_{(2,1)} + 2 c^1_{(2)} = 2;\\
c^3_{(2)} &= c^2_{(1,1)} = 1;\\
c^3_{(2,1)} &= 2 c^2_{(3)} = 4;\\
c^3_{(2,1,1)} &= 2 c^2_{(3,1)} + c^2_{(2,2)} = 1; \\
c^3_{(2,2,2)} &= c^2_{(2,2)}=1; \\
c^3_{(3,2)} & = c^2_{(3)} = 2; \\
c^3_{(4)} & = c^2_{(2,2)} + 2 c^2_{(3)}=5.
\end{align*}
Using equation~\eqref{EqLinkAC}, we can compute all coefficients $a^k_\rho$ for $k=2,3$ and we find the following class expansion (true for any $n\geq 1$):
\begin{align*}
 h_2(J_1,\ldots,J_n) & = \delta_{n\geq 3}\  2 \cl_{(3,1^{n-3})} + \delta_{n \geq 4} \ \cl_{(2,2,1^{n-4})} + \binom{n}{2} \cl_{1^n};\\
 h_3(J_1,\ldots,J_n) & = \delta_{n\geq 4}\  5 \cl_{(4,1^{n-4})} + \delta_{n \geq 5} \ 2 \cl_{(3,2,1^{n-5})} + \delta_{n \geq 6} \ \cl_{(2,2,2,1^{n-6})}\\
& \qquad + \delta_{n \geq 2} \ \left(\binom{n-2}{2} + 4 \binom{n-2}{1} + \binom{n-2}{0}\right) \cl_{2,1^{n-2}}.
\end{align*}
This kind of results could also have been obtained with Theorem~\ref{ThmNewInd}
but the computation is a little harder (it involves discrete integrals of polynomials).
\end{example}

\subsection{Generating series for some coefficients}
S. Matsumoto and J. Novak have computed, using character theory,
the following generating function \cite[Theorem 6.7]{MatsumotoNovakMonomialJM}.
\begin{theorem}[Matsumoto, Novak, 2009]
 For any integer $n \geq 2$, one has:
\begin{equation}
  \sum_{k} a^{k}_{(n)} z^k = \frac{\Cat_{n-1} z^{n-1}}{(1-1^2 z^2)(1-2^2 z^2)\dots(1-(n-1)^2 z^2)}, 
\end{equation}
 where $\Cat_{n-1} = \frac{1}{n}\binom{2(n-1)}{n-1}$ is the usual Catalan number.
\end{theorem}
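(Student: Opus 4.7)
The plan is to exploit the fact that $\mu = (n)$ is a single cycle, via characters. Because $h_k(J_1,\dots,J_n)$ is central and the Jucys-Murphy elements act on the Young basis by contents, $h_k(J_1,\dots,J_n)$ acts on the irreducible $V_\lambda$ as the scalar $h_k(c(\lambda))$, where $c(\lambda)$ denotes the multiset of contents of the boxes of $\lambda$. Fourier inversion on the center of $\QQ[S_n]$ then yields
\[ a^k_\mu = \frac{1}{n!}\sum_{\lambda\vdash n}\dim(V_\lambda)\cdot h_k(c(\lambda))\cdot \chi^\lambda(\mu). \]

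Specializing to $\mu=(n)$, the Murnaghan-Nakayama rule collapses the sum dramatically: $\chi^\lambda((n))$ vanishes unless $\lambda$ is a hook $(n-h,1^h)$, in which case $\chi^\lambda((n)) = (-1)^h$, $\dim(V_\lambda) = \binom{n-1}{h}$, and the multiset of contents is $\{-h,-h+1,\dots,n-h-1\}$. Summing over $k$ with the generating identity $\sum_k h_k(x_1,\dots,x_r)z^k = \prod_i (1-x_i z)^{-1}$ gives the closed form
\[ \sum_k a^k_{(n)}z^k = \frac{1}{n!}\sum_{h=0}^{n-1}(-1)^h\binom{n-1}{h}\prod_{j=-h}^{n-h-1}\frac{1}{1-jz}. \]

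The main remaining task, and the main obstacle, is to show that this alternating sum of $n$ rational functions collapses to $\Cat_{n-1}z^{n-1}/\prod_{i=1}^{n-1}(1-i^2z^2)$. The cleanest route is via partial fractions: the right-hand side has simple poles at $z = \pm 1/i$ for $i=1,\dots,n-1$, and both sides must have equal residues at each pole. On the left, only the two hooks whose content set contains $\pm i$ contribute to the residue at $z = \pm 1/i$, so matching should reduce to a short binomial identity. A standard asymptotic check at $z\to\infty$ rules out any polynomial part, and the Catalan factor $\Cat_{n-1}$ will emerge from the leading residue computation. One can also exploit the evident symmetry $h\leftrightarrow n-1-h$ of the summand (corresponding to the $z\mapsto -z$ symmetry of both sides) to halve the work.

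As a fall-back staying within the combinatorics of the present paper, one could set $F_n(z) := \sum_k c^k_{(n)}z^k$---noting $a^k_{(n)} = c^k_{(n)}$ because $(n)$ has no part equal to $1$, by~\eqref{EqLinkAC}---and try to derive from Theorem~\ref{ThmIndC} the equivalent recursion $n(1-(n-1)^2z^2)F_n(z) = (4n-6)z\,F_{n-1}(z)$. Applying that theorem with $\rho=\emptyset$ and $m=n$ unfortunately brings in the two-part series $\sum_k c^k_{(r,s)}z^k$, so this inductive route would require companion recursions for the $(r,s)$-coefficients and seems noticeably more involved than the character-theoretic argument sketched above.
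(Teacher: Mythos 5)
This statement is not proved in the paper: it is quoted from Matsumoto and Novak, who established it ``using character theory,'' and the paper even remarks that it could not rederive it from Theorem~\ref{ThmIndC}. So your attempt has to be judged on its own. Your setup is correct and is essentially the route of the cited source: $h_k(J_1,\dots,J_n)$ acts on $V_\lambda$ by $h_k$ of the contents, Fourier inversion gives $a^k_{(n)}=\frac{1}{n!}\sum_\lambda \dim(V_\lambda)\,h_k(c(\lambda))\,\chi^\lambda((n))$, Murnaghan--Nakayama restricts to hooks $(n-h,1^h)$ with character $(-1)^h$, dimension $\binom{n-1}{h}$ and contents $\{-h,\dots,n-1-h\}$, and the generating function for the $h_k$ turns each term into a product of geometric factors. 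All of this checks out (and the resulting sum does agree numerically with the stated closed form).

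The genuine gap is the step you yourself flag as ``the main remaining task'': showing that
\[
\frac{1}{n!}\sum_{h=0}^{n-1}(-1)^h\binom{n-1}{h}\prod_{\substack{j=-h\\ j\neq 0}}^{\,n-1-h}\frac{1}{1-jz}
\;=\;\frac{\Cat_{n-1}\,z^{n-1}}{\prod_{i=1}^{n-1}(1-i^2z^2)}.
\]
This identity is the actual content of the theorem, and your sketch of how to establish it rests on a false premise. The content $i>0$ belongs to the hook $(n-h,1^h)$ for \emph{every} $h$ with $0\le h\le n-1-i$, so the residue of the left-hand side at $z=1/i$ is an alternating sum over $n-i$ hooks, not a contribution from ``the two hooks whose content set contains $\pm i$'' (only for the extreme poles $z=\pm 1/(n-1)$ does a single hook contribute). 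Consequently the residue matching is not ``a short binomial identity'' read off from two terms; it is a genuine alternating-sum evaluation, e.g.
\[
\sum_{h=0}^{n-1-i}(-1)^h\binom{n-1}{h}\prod_{\substack{-h\le j\le n-1-h\\ j\neq 0,\,i}}\frac{i}{i-j},
\]
which must be shown to produce the factor $\Cat_{n-1}$ and the correct denominator; this requires a Vandermonde--Chu type argument that you have not supplied. The surrounding logic (both sides vanish at infinity for $n\ge 2$, all poles are simple, so matching residues suffices) is fine, and your honest assessment of the fallback via Theorem~\ref{ThmIndC} matches the paper's own admission that this route is blocked by the two-part coefficients $c^k_{(r,s)}$. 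But as written the proof is incomplete at its central computation, and the one concrete claim made about that computation is incorrect.
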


As $a^{k}_{(n)}=c^k_{(n)}$, the same result holds on the $c$'s.
Unfortunately, we are not able to find a proof of their formula 
{\it via} Theorem~\ref{ThmIndC},
but the latter can be used to derive new results of the same kind.
\medskip

For instance, with $\rho=(n-1)$ and $m=1$, our induction relation writes as
$c_{(n-1,1)}^k=(n-1) \cdot c_{(n)}^{k-1}$ and thus
\begin{multline*}
 \sum_{k} c^{k}_{(n-1,1)} z^k = z \sum_{k} (n-1) c^{k-1}_{(n)} z^{k-1} \\
  = \frac{(n-1) \Cat_{n-1} z^n}{(1-1^2 z^2)(1-2^2 z^2)\dots(1-(n-1)^2 z^2)}.
\end{multline*}
In terms of $a$'s, this result implies:
\begin{align}
 \nonumber \sum_{k} a^{k}_{(n-1,1)} z^k &= \sum_{k} \left(c^k_{(n-1,1)} + c^k_{(n-1)}\right) z^k \\
 &= \frac{(n-1) \Cat_{n-1} z^{n} + (1-(n-1)^2 z^2) \Cat_{n-2} z^{n-2}}{(1-1^2 z^2)(1-2^2 z^2)\dots(1-(n-1)^2 z^2)}. 
\end{align}
This expression is simpler than the one obtained by Matsumoto and Novak for the same quantity \cite[Proposition 6.9]{MatsumotoNovakMonomialJM} and their equivalence is not obvious at all.\medskip

If we want to go further and compute other generating series, one has to solve linear systems. For instance, denoting $F_{\mu}= \sum_{k} c^k_{\mu} z^k$, Theorem~\ref{ThmIndC} gives:
\begin{align*}
F_{(n-2,1,1)} &= z  \left((n-2) F_{(n-1,1)} + F_{(n-2,2)}\right);\\
F_{(n-2,2)} &= z \left((n-2) F_{(n)} + F_{(n-2,1,1)} + F_{(n-2,1)} + F_{(n-2)} \right) .
\end{align*}
After resolution, one has:
\begin{align*}
F_{(n-2,1,1)} &= \frac{z^2 \left( n (n-2) F_{(n)} + z (n-2) F_{(n-1)} + F_{(n-2)} \right) }{1-z^2} ; \\
F_{(n-2,2)} &= \frac{ z \left( (n-2) F_{(n)} + F_{(n-2,1)} + F_{(n-2)} \right) + z^2 (n-2) F_{(n-1,1)} }{1-z^2}.
\end{align*}
Using the results above, one can deduce an explicit generating series for the $c$'s which can be easily transformed into series for the $a$'s.

\subsection{Other symmetric functions}\label{SubsectOtherSym}
Even if we have focused so far on complete symmetric functions,
our method works also with power-sums.

The induction equation~\eqref{EqRecH} should be replaced by:
\begin{align*}
p_{k}(J_{1},\dots,J_{n+1}) &= p_{k}(J_{1},\dots,J_{n}) + J_{n+1} p_{k-1}(J_{1},\dots,J_{n+1})\\
& \qquad -J_{n+1} p_{k-1}(J_{1},\dots,J_{n}).
\end{align*}
Then, using similar arguments to the ones of paragraph~\ref{SubsectNewInd},
one gets the following induction relation:
\begin{align*}
a^{p_k}_{\rho \cup (m)} &= \delta_{m,1} a^{p_k}_{\rho}
 +\sum_{1 \leq i \leq \ell(\rho)} \rho_i
 a^{p_{k-1}}_{\rho \setminus (\rho_{i}) \cup (\rho_{i}+m)} \\
 &\quad
 + \sum_{r+s=m \atop r,s \geq 1} a^{p_{k-1}}_{\rho \cup (r,s)}
 -\delta_{m>1}\ a^{p_{k-1}}_{\rho \cup (m-1)}.
 \end{align*}

 Unfortunately, we are not able to deal with a linear basis of the symmetric function ring
(as the coefficients $a^F_{\lambda}$ depend linearly on $F$, this would solve
the problem for all symmetric functions).

\section{Analogs in the Hecke algebra of $(S_{2n},H_n)$}
\label{SectDoubleClass}
In this section, we consider a slightly different problem, which happens to be the analog of the one of the previous section.
It was first considered recently by P. Zinn-Justin \cite{ZinnJustinJMWeingarten}
and S. Matsumoto \cite{MatsumotoOddJM} in connection with integrals over
orthogonal groups.

\subsection{Hecke algebra of $(S_{2n},H_n)$}
The results of this section are quite classical. A good survey, with a more representation-theoretical point of view, can be found in I.G. Macdonald's book\cite[Chapter 7]{McDo}.

Let us consider the symmetric group of even size $S_{2n}$,
whose elements are seen as permutations of the set $\{1,\bar{1},\ldots,n,\bar{n}\}$.
It contains the hyperoctahedral group which is the subgroup formed by
permutations $\sigma \in S_{2n}$ such that $\overline{\sigma(i)}=\sigma(\bar{i})$
(by convention, $\overline{\bar{i}}=i$).
We are interested in the double cosets $H_{n} \backslash S_{2n} / H_{n}$, \textit{i.e.} the equivalence classes for the relation:
\[\sigma \equiv \tau \text{ if and only if }\exists\ h,h' \in H_{n} \text{ s.t. } \sigma = h \tau h'.\]

Conjugacy classes in the symmetric group algebra can be characterized
easily using cycle-types.
We recall a similar result for the double cosets: 
they are characterized \textit{via} coset-types.

\begin{definition}\label{DefCosetType}
 Let $\sigma$ be a permutation of $S_{2n}$. Consider the following graph $G_{\sigma}$:
 \begin{itemize}
  \item its $2n$ vertices are labelled by $\{1,\bar{1},\ldots,n,\bar{n}\}$;
  \item we put a solid edge between $i$ and $\bar{i}$ and a dashed one between $\sigma(i)$ and $\sigma(\bar{i})$ for each $i$.
\end{itemize}
Forgetting the types of the edges, we obtain a graph with only vertices of degree $2$. Thus, it is a collection of cycles.
Moreover, due to the bicoloration of edges, it is easy to see that all these cycles have an even length.

We call coset-type of $\sigma$ the partition $\mu$ such that the lengths of the cycles of $G_{\sigma}$ are equal to $2\mu_{1}, 2\mu_{2}, \dots$
\end{definition}

\begin{example}
 Let $n=4$ and $\sigma$ be the following permutation:
 \[1 \mapsto 3,\ \bar{1} \mapsto 1,\ 2 \mapsto \bar{4},\ \bar{2} \mapsto \bar{3},\ 3 \mapsto \bar{2},\ \bar{3} \mapsto 2,\ 4\mapsto 4,\ \bar{4} \mapsto \bar{1}.\]
 The corresponding graph $G_{\sigma}$ is drawn on figure~\ref{FigGSigma}.

\begin{figure}[ht]
\[\includegraphics[width=5cm]{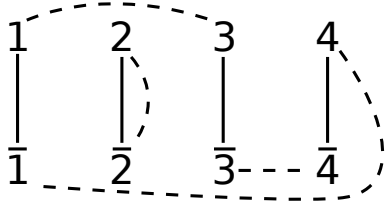}\]
\caption{Example of graph $G_{\sigma}$}
\label{FigGSigma}
\end{figure}
\end{example}

This graph is the disjoint union of one cycle of length $6$ ($1,3,\bar{3},\bar{4},4,\bar{1}$) and one cycle of length $2$ ($2,\bar{2}$).
Thus the coset-type of $\sigma$ is the integer partition $(3,1)$.

\begin{proposition}\cite[section 7.1]{McDo}
 Two permutations are in the same double coset
 if and only if their coset-types are the same.
\end{proposition}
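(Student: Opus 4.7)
The plan is to reformulate both double cosets and coset-type in terms of perfect matchings. Let $M_s = \bigl\{\{1,\bar 1\},\ldots,\{n,\bar n\}\bigr\}$ be the standard pair matching on $\{1,\bar 1,\ldots,n,\bar n\}$. The key observation is that $H_n$ is exactly the stabilizer of $M_s$ (as a set of $2$-element sets) under the natural action of $S_{2n}$. Moreover, in the graph $G_\sigma$ of Definition~\ref{DefCosetType}, the solid edges are $M_s$ and the dashed edges are the matching $\sigma \cdot M_s := \bigl\{\{\sigma(i),\sigma(\bar i)\}:1\leq i\leq n\bigr\}$.

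For the forward direction, suppose $\sigma=h\tau h'$ with $h,h'\in H_n$. Since $h'\cdot M_s=M_s$, one has $\sigma\cdot M_s=h\tau\cdot M_s$. Viewing $h$ as a relabeling of the $2n$ vertices, it sends the solid matching of $G_\tau$ (namely $M_s$) to $h\cdot M_s=M_s$, and the dashed matching of $G_\tau$ (namely $\tau\cdot M_s$) to $h\tau\cdot M_s=\sigma\cdot M_s$. Hence $h\colon G_\tau\to G_\sigma$ is a color-preserving graph isomorphism, and in particular the multisets of cycle lengths coincide, giving $\type(\sigma)=\type(\tau)$.

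For the converse direction, assume $\sigma$ and $\tau$ have the same coset-type. Then $G_\sigma$ and $G_\tau$ are disjoint unions of alternating solid/dashed cycles with the same multiset of lengths $(2\mu_1,2\mu_2,\ldots)$. I would construct a color-preserving bijection $\phi$ of the vertex set cycle by cycle: pair each cycle of $G_\tau$ with a cycle of $G_\sigma$ of the same length; within such a pair, pick a solid edge in each and a direction of traversal, then read off the vertices in order. This yields a bijection of the two cycles sending solid edges to solid edges and dashed to dashed. Assembling these partial bijections gives $\phi\in S_{2n}$ with $\phi\cdot M_s=M_s$ (so $\phi\in H_n$) and $\phi\tau\cdot M_s=\sigma\cdot M_s$. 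Setting $h:=\phi$ and $h':=\tau^{-1}\phi^{-1}\sigma$, a direct check
\[ h'\cdot M_s=\tau^{-1}\phi^{-1}\sigma\cdot M_s=\tau^{-1}\phi^{-1}(\phi\tau\cdot M_s)=M_s \]
shows $h'\in H_n$, and by construction $\sigma=h\tau h'$.

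The only nontrivial step is the cycle-by-cycle construction of $\phi$, and the mild subtlety is to verify that within a single alternating cycle the color-preserving isomorphism can always be realized: this amounts to observing that a cycle of length $2\mu_i$ with alternating edge-colors admits a color-preserving automorphism group acting transitively on its $\mu_i$ solid edges together with both orientations, so any solid edge of $G_\tau$ can be sent to any solid edge of $G_\sigma$ in the matching cycle. Everything else is bookkeeping.
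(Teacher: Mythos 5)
Your proof is correct and is essentially the standard argument from Macdonald's book that the paper merely cites without reproving: identify $H_n$ with the stabilizer of the matching $M_s=\{\{1,\bar 1\},\dots,\{n,\bar n\}\}$, so that the double coset of $\sigma$ is determined by the $H_n$-orbit of the matching $\sigma\cdot M_s$, and orbits of pairs of matchings are classified by the cycle lengths of their union. The only cosmetic slip is writing $\type(\sigma)=\type(\tau)$ (cycle-type) where you mean equality of coset-types; the cycle-by-cycle construction of $\phi$ in the converse is fine, including for cycles of length $2$.
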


If $\mu$ is a partition of $n$, we denote
\[\cl^{(2)}_{\mu}=\sum_{\sigma \in S_{2n} \atop \text{coset-type}(\sigma)=\mu} \sigma \ \in \ring[S_{2n}].\]
It is immediate that the elements $\cl^{(2)}_{\mu}$, when $\mu$ runs over partitions of $n$ span linearly a subalgebra $Z_n^{(2)}$ of $\ring[S_{2n}]$.
Equivalently, one can define $Z_n^{(2)}$ as the algebra of functions on $S_{2n}$, invariant by left and right multiplication by an element of $H_{n}$, endowed with the convolution product
\[ f \star g(\sigma) = \sum_{\tau_1, \tau_2 \in S_{2n} \atop \tau_{1} \tau_{2}=\sigma}
f(\tau_{1}) g(\tau_{2}).\]
One can prove using representation theory \cite[section 7.2]{McDo} that this algebra is commutative (in other terms, $(S_{2n},H_{n})$ is a Gelfand pair).

\subsection{Odd Jucys-Murphy elements}
In this section we will look at symmetric functions in odd-indexed Jucys-Murphy elements in $S_{2n}$.
Rewriting as permutations on the set $\{1,\bar{1},2,\bar{2},\dots,n,\bar{n}\}$ (ordered by $1<\bar{1}<2<\bar{2}<\dots<n<\bar{n}$), these elements are:
\[J^{(2)}_{i} = \sum_{j=1,\bar{1},\dots,i-1,\overline{i-1}} (j\ i).\]
They were considered by P. Zinn-Justin \cite{ZinnJustinJMWeingarten}
and then S. Matsumoto \cite{MatsumotoOddJM}.

Let us consider also the following element in $\QQ[S_{2n}]$:
\[p_{n}= \sum_{h \in H_{n}} h. \]
Then the following result holds, which may be seen as an analog of the fact
that symmetric functions in Jucys-Murphy elements are central in the symmetric group
algebra.
\begin{proposition}
 If $F$ is a symmetric function, then:
 \[x_{n,F} := F(J^{(2)}_{1}, \dots, J_{n}^{(2)}) p_{n} = p_{n} F(J_{1}^{(2)},\dots,J_{n}^{(2)}). \]
 Moreover $x_{n,F}$ belongs to the algebra $Z_n^{(2)}$.
\end{proposition}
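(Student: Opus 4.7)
Right $H_n$-invariance of $x_{n,F}$ is immediate: for any $h' \in H_n$ we have $p_n h' = p_n$, so $x_{n,F} h' = F(J^{(2)}) p_n h' = x_{n,F}$; symmetrically, $p_n F(J^{(2)})$ is left $H_n$-invariant from $h p_n = p_n$. The substance of the proposition therefore lies in the commutation $F(J^{(2)}) p_n = p_n F(J^{(2)})$: once this holds, the common value is both left and right $H_n$-invariant and hence lies in $Z_n^{(2)}$.

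My plan is to prove the commutation by exhibiting, for each symmetric function $F$, a central element $Z_F \in Z(\mathbb{Q}[S_{2n}])$ such that $F(J^{(2)}) p_n = Z_F p_n$. Centrality of $Z_F$ then gives $Z_F p_n = p_n Z_F$, and running the identical argument on the left yields $p_n F(J^{(2)}) = p_n Z_F$, so altogether $F(J^{(2)}) p_n = Z_F p_n = p_n Z_F = p_n F(J^{(2)})$, as required.

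The key combinatorial ingredient is the right-coset identity $(a,b)\,p_n = (\bar a,\bar b)\,p_n$, valid for every cross-pair transposition since $(a,b)(\bar a,\bar b) \in H_n$. Combined with the decomposition $J_{2i} = (i,\bar i) + \sum_{j \in \{1,\bar 1,\ldots,i-1,\overline{i-1}\}}(j,\bar i)$, it yields the identity
\[ e_1(J_1,\ldots,J_{2n})\, p_n = n\, p_n + 2\, e_1(J^{(2)}_1,\ldots,J^{(2)}_n)\, p_n, \]
so that
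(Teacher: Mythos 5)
Your reduction of the proposition to the single commutation identity $F(J^{(2)})p_n=p_nF(J^{(2)})$ is correct, and so is the identity $e_1(J_1,\dots,J_{2n})\,p_n=n\,p_n+2\,e_1(J^{(2)}_1,\dots,J^{(2)}_n)\,p_n$. But the proof then breaks off mid-sentence (``so that'') and what you have actually constructed is a central $Z_F$ only for $F=e_1$. That is the essentially trivial degree-one case; the entire content of the proposition lies in degree $k\ge 2$, and the construction of $Z_F$ for general $F$ --- the heart of your announced plan --- is missing. The gap is genuine and not merely cosmetic: the coset identity $(a\ b)\,p_n=(\bar a\ \bar b)\,p_n$ only rewrites a transposition sitting \emph{adjacent} to $p_n$, so it does not obviously help with a product such as $J^{(2)}_iJ^{(2)}_j\,p_n$, where the inner factor is separated from $p_n$.

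The plan can be completed, and the completed argument is genuinely different from the paper's, which proves by induction on $n$ the explicit double-coset expansion $e_k(J^{(2)}_1,\dots,J^{(2)}_n)\,p_n=\sum_{|\mu|-\ell(\mu)=k}\cl^{(2)}_\mu$ and then concludes by multiplicativity and linearity (your route would produce the commutation without computing any class expansion). Two steps are needed. First, your coset identity gives $J_{2i}\,p_n=(J^{(2)}_i+1)\,p_n$ (note $J^{(2)}_i=J_{2i-1}$ in the ordering $1<\bar1<\dots<n<\bar n$), and since Jucys--Murphy elements commute this propagates through products: $J_aJ_{2i}\,p_n=J_a(J_{2i-1}+1)\,p_n=(J_{2i-1}+1)J_a\,p_n$, whence $G(J_1,\dots,J_{2n})\,p_n=G(J^{(2)}_1,J^{(2)}_1+1,\dots,J^{(2)}_n,J^{(2)}_n+1)\,p_n$ for every polynomial $G$. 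Second, you must check that the symmetric polynomials $\tilde e_k(x_1,\dots,x_n):=e_k(x_1,x_1+1,\dots,x_n,x_n+1)$ generate the full ring of symmetric polynomials in $x_1,\dots,x_n$ over $\QQ$; they do, because $\tilde e_k=2e_k+Q_k(e_1,\dots,e_{k-1})$ for some polynomial $Q_k$, so the system is triangular. Writing $F=P(\tilde e_1,\dots,\tilde e_n)$, replacing the factors $\tilde e_k(J^{(2)})$ one at a time by the central elements $e_k(J_1,\dots,J_{2n})$ (which may then be commuted out of the way) yields $F(J^{(2)})\,p_n=Z_F\,p_n$ with $Z_F=P\bigl(e_1(J_1,\dots,J_{2n}),\dots,e_n(J_1,\dots,J_{2n})\bigr)$ central, and the mirror argument on the left finishes the proof exactly as you outlined. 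Until these steps are written down, however, the proposition is not proved.
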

\begin{proof}[Sketch of proof]
The first step is to prove by induction that
\[ e_{k}(J_{1}^{(2)},\dots,J_{n}^{(2)}) p_{n} =  p_{n} e_{k}(J_{1}^{(2)},\dots,J_{n}^{(2)}) = \sum_{\mu \vdash n \atop |\mu| - \ell(\mu) = k} \cl^{(2)}_{\mu}.\]
The result follows for all $F$ by multiplication and linear combination.
See \cite[Proposition 3]{ZinnJustinJMWeingarten} and 
\cite[Proposition 3.1]{MatsumotoOddJM} for details.
\end{proof}

Inspired by the results of section~\ref{SectSymGrpAlg},
we may look at the class expansion of $x_{n,F}$,
\textit{i.e.} the coefficients $b^{F}_{\mu}$ such that:
\[F(J_{1}^{(2)},\dots,J_{n}^{(2)}) p_{n} = \sum_{\mu \vdash n} b^{F}_{\mu} \cl^{(2)}_{\mu}.\]
As seen in the sketch of proof for the proposition above, the $b$'s are easy to compute in the case of elementary functions.

In the following paragraph, we will establish some induction relations for the $b$'s in the case of complete symmetric functions.
We focus on this case (and thus use the short notation $b^k_{\mu}=b^{h_{k}}_{\mu}$) because these coefficients appear in the computation of the asymptotic expansion of some integrals over the orthogonal group \cite[Theorem 7.3]{MatsumotoOddJM}.

\subsection{A simple induction relation}
In this paragraph, using the same method as in subsection~\ref{SubsectNewInd}, we prove the following induction formula for the $b$'s.

\begin{theorem}\label{ThmNewInd2}
For any partition $\rho$ and positive integers $k$ and $m$, one has:
\begin{equation}\label{EqNewInd2}
 b^{k}_{\rho \cup (m)} = \delta_{m,1} b^k_{\rho} +
 2 \sum_{1 \leq i \leq \ell(\rho)} \rho_i
 b^{k-1}_{\rho \setminus (\rho_{i}) \cup (\rho_{i}+m)}
 + \sum_{r+s=m \atop r,s \geq 1} b^{k-1}_{\rho \cup (r,s)} 
 + (m-1) b^{k-1}_{\rho \cup (m)}.
\end{equation}
\end{theorem}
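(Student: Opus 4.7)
The plan is to adapt the argument of Theorem~\ref{ThmNewInd} to the Hecke-algebra setting. As before, I would start from the trivial identity
\[
h_k(J_1^{(2)}, \dots, J_{n+1}^{(2)}) = h_k(J_1^{(2)}, \dots, J_n^{(2)}) + J_{n+1}^{(2)} h_{k-1}(J_1^{(2)}, \dots, J_{n+1}^{(2)}),
\]
multiply on the right by $p_{n+1}$, and extract the coefficient of a fixed $\sigma \in S_{2(n+1)}$ of coset-type $\rho \cup (m)$ whose pair $\{n+1, \overline{n+1}\}$ lies on a cycle of length $2m$ in $G_\sigma$. The left-hand side immediately yields $b^k_{\rho \cup (m)}$.

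For the first term on the right I would write $p_{n+1} = p_n \cdot q_{n+1}$, where $q_{n+1}$ is a sum of chosen right-coset representatives of $H_n$ in $H_{n+1}$, so that the coefficient becomes $\sum_c [\sigma c^{-1}] x_{n, h_k}$. Since the support of $x_{n, h_k}$ consists of permutations fixing $n+1$ and $\overline{n+1}$, and $c^{-1} \in H_{n+1}$ is pair-preserving, non-vanishing forces $\sigma^{-1}(\overline{n+1}) = \overline{\sigma^{-1}(n+1)}$, which graph-theoretically means that $\{n+1, \overline{n+1}\}$ sits on a length-$2$ cycle of $G_\sigma$, hence $m=1$. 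In that case a unique coset representative survives, and the restriction of $\sigma c^{-1}$ to $\{1,\bar1,\dots,n,\bar n\}$ has coset-type $\rho$ (the isolated length-$2$ cycle simply disappears); this yields exactly the $\delta_{m,1}\, b^k_\rho$ contribution.

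The core step is the expansion
\[
[\sigma] J_{n+1}^{(2)} x_{n+1, h_{k-1}} = \sum_{j \in \{1,\bar 1,\dots, n, \bar n\}} b^{k-1}_{\type((j\ n+1)\sigma)},
\]
together with the case analysis for the coset-type of $(j\ n+1)\sigma$. Left multiplication by $(j\ n+1)$ preserves every solid and every dashed edge of $G_\sigma$ except the two dashed edges incident to $j$ and $n+1$, whose far endpoints get swapped. When $j$ lies on a cycle of $G_\sigma$ different from the one containing $n+1$, corresponding to a part $\rho_i$ of $\rho$, the swap merges the two cycles into a single cycle of length $2(\rho_i+m)$; since the $i$-th cycle contains $2\rho_i$ vertices, summing over such $j$ yields $2\sum_i \rho_i\, b^{k-1}_{\rho \setminus (\rho_i) \cup (\rho_i+m)}$. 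The doubling compared to Theorem~\ref{ThmNewInd} simply reflects that each coset-type cycle corresponds to twice as many graph vertices. When $j$ lies on the same cycle as $n+1$, I label this cycle $u_1 = n+1,\, u_2 = \overline{n+1},\, u_3, \dots, u_{2m}$ with alternating solid/dashed edges and write $j = u_t$. For $t$ odd with $t = 2l+1$, $1 \leq l \leq m-1$, careful tracing of the reconnected graph shows that the cycle splits into two cycles of lengths $2l$ and $2(m-l)$, contributing $\sum_{r+s=m,\, r,s \geq 1} b^{k-1}_{\rho \cup (r,s)}$. For $t$ even with $t \notin \{2, 2m\}$ the cycle remains a single cycle of length $2m$, while $t = 2m$ (the unique $j$ that is the other endpoint of the dashed edge incident to $n+1$, still in $\{1,\bar 1,\dots,n,\bar n\}$ as long as $m > 1$) leaves $G_\sigma$ entirely unchanged; together these provide $(m-1)\, b^{k-1}_{\rho \cup (m)}$. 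Adding the contributions of the three steps gives the claimed identity.

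The main obstacle is the bookkeeping in the same-cycle case: one must trace the four arcs left after removing the two dashed edges and verify that the two new edges either reconnect them into a single cycle (for even $t$) or split them into two (for odd $t$), depending on the parity of the position of $j$. This parity phenomenon, absent from the symmetric-group analogue Theorem~\ref{ThmNewInd}, is precisely what produces the extra $(m-1)\, b^{k-1}_{\rho \cup (m)}$ term appearing in the Hecke-algebra version.
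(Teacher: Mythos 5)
Your proposal is correct and follows essentially the same route as the paper: the same induction relation multiplied by $p_{n+1}$, the same coefficient extraction, and the same three-way case analysis (merge with another cycle, odd-position ``twist'' preserving the coset-type, even-position ``cut'' splitting the cycle), with matching counts $2\rho_i$, $m-1$, and $m-1$. The only differences are presentational — you handle the $\delta_{m,1}$ term via explicit coset representatives of $H_n$ in $H_{n+1}$ and index the same-cycle case by position along the cycle rather than by graph distance — and both are sound.
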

\begin{proof}
 As before, the starting point of our proof is an induction relation on complete symmetric functions:
\begin{multline*}
 h_{k}(J_{1}^{(2)},\dots,J_{n}^{(2)},J_{n+1}^{(2)}) = h_{k}(J_{1}^{(2)},\dots,J_{n}^{(2)})\\
 	 + J^{(2)}_{n+1} h_{k-1}(J_{1}^{(2)},\dots,J_{n}^{(2)},J_{n+1}^{(2)}).
\end{multline*}
Multiplying both sides by $p_{n+1}$, one has:
\begin{multline}\label{EqRecH2}
  h_{k}(J_{1}^{(2)},\dots,J_{n}^{(2)},J_{n+1}^{(2)}) \cdot p_{n+1}=
  h_{k}(J_{1}^{(2)},\dots,J_{n}^{(2)}) \cdot p_{n+1} \\
+ J^{(2)}_{n+1} h_{k-1}(J_{1}^{(2)},\dots,J_{n}^{(2)},J_{n+1}^{(2)}) \cdot p_{n+1}.
\end{multline}

Let us begin with the case $m=1$.
We choose a permutation $\sigma \in S_{2n}$ of coset-type $\rho$ and
we denote $\sigma'$ its image by the canonical embedding
$S_{2n} \hookrightarrow S_{2n+2}$.
It has coset-type $\rho \cup (1)$.
By definition,
\begin{equation}
 [\sigma'] h_{k}(J_{1}^{(2)},\dots,J_{n}^{(2)},J_{n+1}^{(2)}) p_{n+1} =
 b^{k}_{\rho \cup 1} \label{EqCoef331}
\end{equation}
For the second term, we write:
\begin{multline*}
h_{k}(J_{1}^{(2)},\dots,J_{n}^{(2)}) \cdot p_{n+1} =
h_{k}(J_{1}^{(2)},\dots,J_{n}^{(2)}) \cdot p_n \\
\cdot \left(1 + (n+1\ \overline{n+1}) + 
\sum_{i=1,\bar{1},\dots,n,\bar{n}} (n+1\ i) (\overline{n+1}\ \bar{i}) \right).
\end{multline*}
Notice that $h_{k}(J_{1}^{(2)},\dots,J_{n}^{(2)}) p_{n}$ lies in the algebra
$\ring[S_{2n}] \subset \ring[S_{2n+2}] $ and hence
is a linear combination of permutations fixing $n+1$ and $\overline{n+1}$.
For such permutations $\tau$, neither $\tau (n+1\ \overline{n+1})$
nor $\tau (n+1\ i) (\overline{n+1}\ \bar{i})$ can be equal to $\sigma'$
(these two permutations do not fix $n+1$ and $\overline{n+1}$).
Therefore,
\begin{equation}
    [\sigma'] h_{k}(J_{1}^{(2)},\dots,J_{n}^{(2)})\cdot  p_{n+1}
 = [\sigma] h_{k}(J_{1}^{(2)},\dots,J_{n}^{(2)}) \cdot p_{n} = b^k_{\rho}.
 \label{EqCoef332}
\end{equation}

We still have to compute:
\begin{multline}\label{EqTechniqueCoef}
 [\sigma'] J^{(2)}_{n+1} 
 h_{k-1}(J_{1}^{(2)},\dots,J_{n}^{(2)},J_{n+1}^{(2)}) \cdot p_{n+1}  \\
 = \sum_{j = 1,\bar{1},\dots,n,\bar{n}} [(n+1\ j) \sigma']
 h_{k-1}(J_{1}^{(2)},\dots,J_{n}^{(2)},J_{n+1}^{(2)}) \cdot p_{n+1} \\
 = \sum_{j = 1,\bar{1},\dots,n,\bar{n}}
 b^{k-1}_{\text{coset-type}((n+1\ j) \sigma')}.
\end{multline}
Let us look at the coset-type of $(n+1\ j) \sigma'$.
Denote by $d_{j}$ (resp. $d_{n+1}$) the other extremity of the dashed edge 
of extremity $j$ (resp. $n+1$) in $G_{\sigma'}$
(see definition~\ref{DefCosetType}).
Then the graph $G_{(n+1\ j) \sigma'}$ has exactly the same edges as
$G_{\sigma'}$, except for $(j,d_{j})$ and $(n+1,d_{n+1})$,
which are replaced by $(j,d_{n+1})$ and $(n+1,d_{j})$.

As $(n+1,\overline{n+1})$ is a loop of length $2$ in $G_{\sigma'}$,
if we assume that $j$ was in a loop of size $2 \rho_{i}$,
then these two loops are replaced by a loop of size $2 \rho_{i} + 2$
in $G_{(n+1\ j) \sigma'}$ (it is a particular case of the phenomenon
drawn on Figure~\ref{FigJoinLoop}).

  \begin{figure}[ht]
\[\begin{array}{c}
\includegraphics[width=5cm]{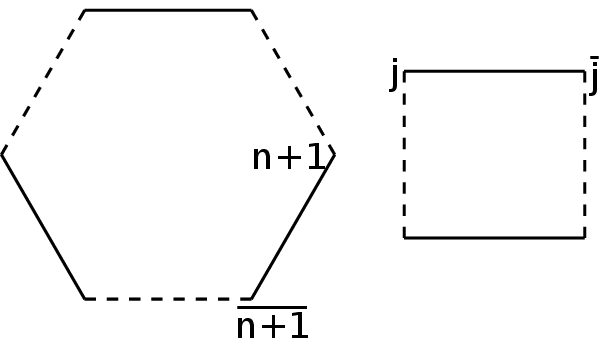}
\end{array}
\rightarrow
\begin{array}{c}
 \includegraphics[width=5cm]{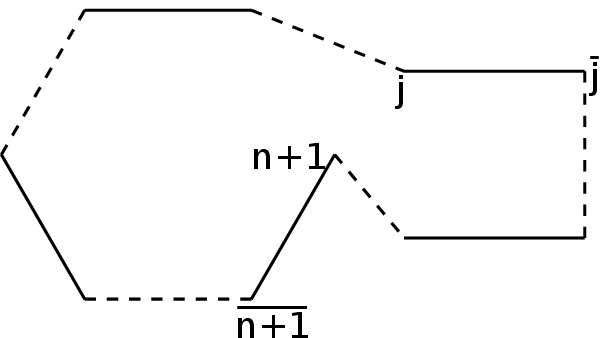}
\end{array}
\]
\caption{$G_{\sigma}$ and $G_{(n+1\ j) \sigma}$ in the ``join'' case}
\label{FigJoinLoop}
\end{figure}
Therefore $(n+1\ j) \sigma'$ has coset-type
$\rho \backslash  \rho_{i} \cup (\rho_{i} +1)$.
As there are $2\rho_i$ elements in the $i$-th loop of $G_\sigma'$, one obtains:
\begin{equation}
    \label{EqCoef333}
[\sigma'] J^{(2)}_{n+1} h_{k-1}(J_{1}^{(2)},\dots,J_{n}^{(2)},J_{n+1}^{(2)})
\cdot p_{n+1} = 2 \sum_{1 \leq i \leq \ell(\rho)}
\rho_i b^{k-1}_{\rho \setminus (\rho_{i}) \cup (\rho_{i}+1)}
\end{equation}
Putting together equations \eqref{EqRecH2}, \eqref{EqCoef331}, \eqref{EqCoef332}
and \eqref{EqCoef333}, we obtain the case $m=1$ of the theorem.\medskip

Let us consider now the case $m>1$.
We choose a permutation $\sigma \in S_{2n+2}$ of coset-type $\rho \cup (m)$
such that $n+1$ is in a loop of size $2m$ in $G_{\sigma}$.
As $m>1$, this implies that
$\overline{\sigma^{-1}(n+1)} \neq \sigma^{-1}(\overline{n+1})$.
On the other hand, if $\tau$ lies in $\ring[S_{2n}] \subset \ring[S_{2n+2}]$
and $i=1,\bar{1},\dots,n,\bar{n}$, one has:
\begin{align*}
    \big( \tau (i\ n+1) (\overline{i}\ \overline{n+1}) \big)^{-1} (n+1) &= i ;\\
    \big( \tau (i\ n+1) (\overline{i}\ \overline{n+1}) \big)^{-1} (\overline{n+1}) &= \overline{i}.
\end{align*}
Thus, $\sigma$ can not be written as $\tau (i\ n+1) (\overline{i}\ \overline{n+1})$
with the conditions above.
It can not be equal to $\tau$ or written as $\tau (n+1\ \overline{n+1})$ either.
Therefore,
\[ [\sigma] h_{k}(J_{1}^{(2)},\dots,J_{n}^{(2)}) p_{n+1} = 0 \]
As a consequence, one has:
\begin{multline*}
b^k_{\rho \cup (m)} =
[\sigma] h_{k}(J_{1}^{(2)},\dots,J_{n}^{(2)},J_{n+1}^{(2)})\cdot p_{n+1}\\
= [\sigma] J^{(2)}_{n+1} h_{k-1}(J_{1}^{(2)},\dots,J_{n}^{(2)},J_{n+1}^{(2)})
\cdot p_{n+1} \\
=  \sum_{j = 1,\bar{1},\dots,n,\bar{n}}
b^{k-1}_{\text{coset-type}( ( n+1\ j) \sigma')}.
\end{multline*}
and we have to look at the possible coset types of $(n+1\ i) \sigma$ (equation~\eqref{EqTechniqueCoef} is still true).

Let us number the loops of the graph $G_\sigma$ with the integers 
$1,2,\dots,\ell(\rho)+1$ such that the $i$-th loop has length $2 \rho_i$
for $i \leq \ell(\rho)$ and the $\ell(\rho)+1$-th loop is the one containing
$n+1$.
As before, the graph $G_{(n+1\ j) \sigma}$ is obtained from $G_{\sigma}$ by
replacing edges $(j,d_{j})$ and $(n+1,d_{n+1})$ by $(j,d_{n+1})$ and
$(n+1,d_{j})$.
We distinguish three cases:

\begin{description}
    \item[``join''] If $j$ lies in the $i$-th loop of $G_{\sigma}$, then 
        $G_{(n+1\ j) \sigma}$ is obtained from $G_{\sigma}$ by erasing
        its $i$-th and $\ell(\rho)+1$-th loops and replacing them
        by a loop of size $2 (\rho_{i} +m)$ (see figure~\ref{FigJoinLoop}).
  In this case, $(n+1\ i) \sigma$ has coset-type
  $\rho \backslash  \rho_{j} \cup (\rho_{j} +m)$.

  As there are $2 \rho_i$ elements in the $i$-th loop of $G_\sigma$, one obtains:
  \[\sum_{j=1,\bar{1},\dots,n,\bar{n} \atop j \nsim_{G_\sigma} n+1}
  b^{k-1}_{\text{coset-type}( ( n+1\ i) \sigma')}
  = 2 \sum_{1 \leq i \leq \ell(\rho)} \rho_i
  b^{k-1}_{\rho \setminus (\rho_{i}) \cup (\rho_{i}+m)}, \]
  where $j \nsim_{G_\sigma} n+1$ means that $j$ and $n+1$ lie in different
  loops of $G_\sigma$.

  \item[``twist''] If $j$ lies in the $\ell(\rho)+1$-th loop of $G_{\sigma}$
      and if the distance between $j$ and $n+1$ is odd,
      then $G_{(n+1\ i) \sigma}$ is obtained from $G_{\sigma}$ by
      the transformation drawn in figure~\ref{FigTwist}.
      In particular, in this case, $(n+1\ j) \sigma$ has the same
      coset-type as $\sigma$, that is $\rho \cup (m)$.
      
      As $j$ can not be equal to $\overline{n+1}$, there are $m-1$ possible
      values for $j$ in this case. Thus,
      \[\sum_{j=1,\bar{1},\dots,n,\bar{n} \atop {j \sim_{G_\sigma} n+1
      \atop d_G(j,n+1) \text{ odd}}} 
      b^{k-1}_{\text{coset-type}( ( n+1\ j) \sigma')}
      = (m-1) b^{k-1}_{\rho \cup (m)}.\]

    \begin{figure}[ht]
\[\begin{array}{c}
\includegraphics[width=3.5cm]{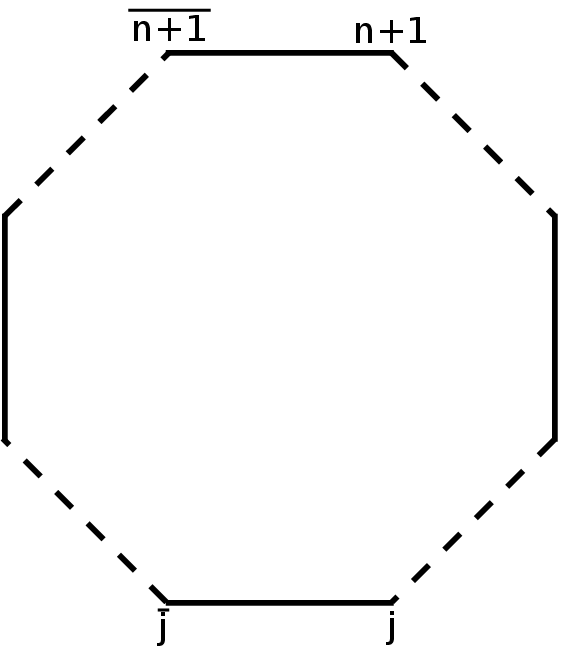}
\end{array}
\rightarrow
\begin{array}{c}
 \includegraphics[width=3.5cm]{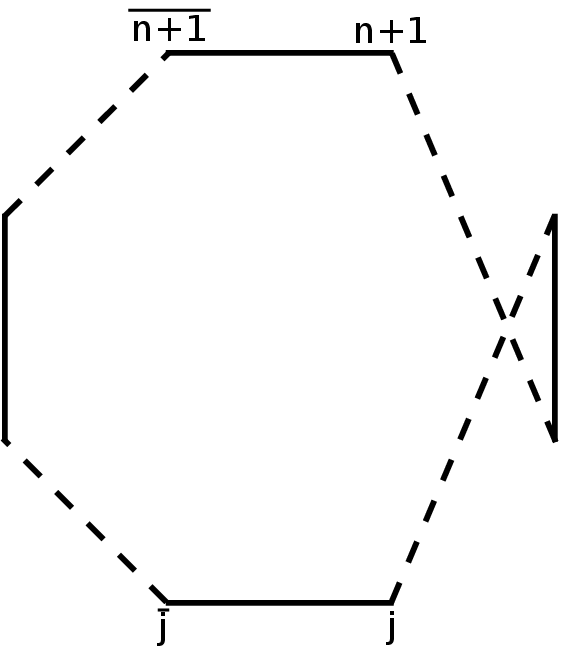}
\end{array}
\]
\caption{$G_{\sigma}$ and $G_{(n+1\ j) \sigma}$ in the ``twist'' case}
\label{FigTwist}
\end{figure}

  \item[``cut''] We consider now the case where $j$ lies in the $\ell(\rho)+1$-th loop
      of $G_{\sigma}$ and the distance between $j$ and $n+1$ is even.
      We choose an arbitrary orientation of the $\ell(\rho)+1$-th loop of
      $G_{\sigma}$ (we keep the same for all $j$ in this situation) and
      we denote $2h$ ($1\leq h \leq m-1$) the distance between $n+1$ and $j$
      when following the loop along this direction.
      Then $G_{(n+1\ j) \sigma}$ is obtained from $G_{\sigma}$ by erasing
      its $\ell(\rho)+1$-th loop and replacing it by two loops of length $2h$
      and $2(m-h)$.(see figure~\ref{FigCutLoop}).
      Thus, in this case, $(n+1\ j) \sigma$ has coset-type $\rho \cup (h,m-h)$.
  
      There is exactly one integer $j$ for each integer $h$ between $1$ and
      $m-1$, so:
      \begin{multline*}
          \sum_{j=1,\bar{1},\dots,n,\bar{n} \atop {j \sim_{G_\sigma} n+1
          \atop d_G(j,n+1) \text{ even}} }
          b^{k-1}_{\text{coset-type}( ( n+1\ j) \sigma')}
          = \sum_{h=1}^{m-1} b^{k-1}_{\rho \cup (h,m-h)} 
          = \sum_{r+s=m \atop r,s \geq 1} b^{k-1}_{\rho \cup (r,s)}
      \end{multline*}

  \begin{figure}[ht]
\[\begin{array}{c}
\includegraphics[width=3.5cm]{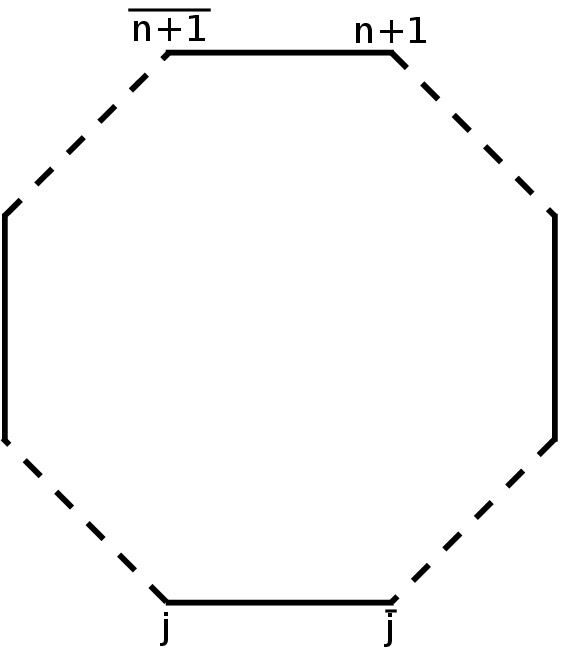}
\end{array}
\rightarrow
\begin{array}{c}
 \includegraphics[width=3.5cm]{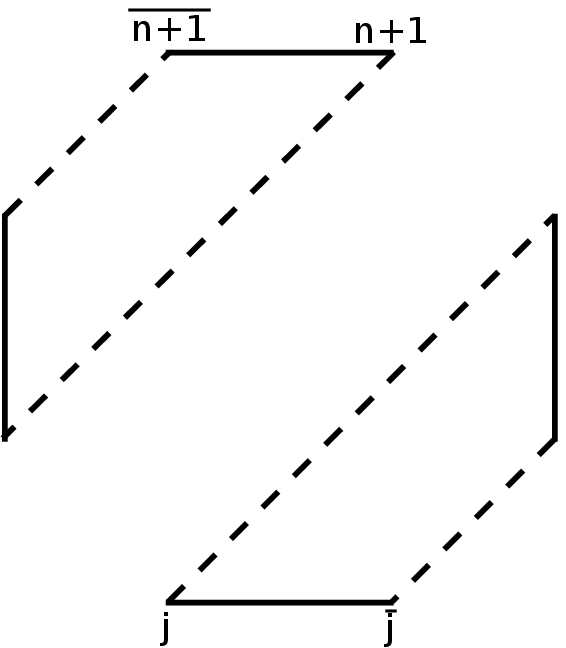}
\end{array}
\]
\caption{$G_{\sigma}$ and $G_{(n+1\ j) \sigma}$ in the ``cut'' case}
\label{FigCutLoop}
\end{figure}
\end{description}
Putting the different cases together, one has
 \begin{multline*}
 b^k_{\rho \cup (m)}
 = 2 \sum_{1 \leq i \leq \ell(\rho)} \rho_i b^{k-1}_{\rho \setminus (\rho_{i}) \cup (\rho_{i}+m)} + \sum_{r+s=m \atop r,s \geq 1} b^{k-1}_{\rho \cup (r,s)}  + (m-1) b^{k-1}_{\rho \cup (m)},
 \end{multline*}
 which is exactly what we wanted to prove.
\end{proof}

\begin{remark}
As in section~\ref{SectSymGrpAlg}, define coefficients $d^k_{\rho}$
as solution of the sparse triangular system
\begin{equation}\label{EqLinkBD}
 b^k_{\rho} = \sum_{i = 0}^{m_{1}(\rho)} d^k_{\bar{\rho} \cup 1^i}
 \binom{m_{1}(\rho)}{i}.
 \end{equation}
Then, for a given $k$, only finitely many $d^k_\rho$ are non-zero
(see \cite[Theorem 8.4]{MatsumotoOddJM}).
But, unfortunately, we have no combinatorial interpretation in this case
to obtain directly induction relations on $d$.
This raises the question of the existence of a partial Hecke algebra of $(S_{2n},H_n)$,
out of the scope of this article.
\end{remark}
\begin{remark}
    As in the framework of the symmetric group algebra (paragraph~\ref{SubsectOtherSym}),
    the method extends easily to power-sum symmetric functions.
    More precisely, the following induction relation could be proved with similar arguments:
    \begin{multline*}
        b^{p_{k}}_{\rho \cup (m)} = \delta_{m,1} b^{p_k}_{\rho} +
 2 \sum_{1 \leq i \leq \ell(\rho)} \rho_i
 b^{p_{k-1}}_{\rho \setminus (\rho_{i}) \cup (\rho_{i}+m)}\\
 + \sum_{r+s=m \atop r,s \geq 1} b^{p_{k-1}}_{\rho \cup (r,s)} 
 + (m-1) b^{p_{k-1}}_{\rho \cup (m)} - \delta_{m>1}\ b^{p_{k-1}}_{\rho \cup (m-1)}.
 \end{multline*}
\end{remark}
\subsection{Subleading term}

The induction relation proved in the previous paragraph is a good tool to study the leading and subleading terms of $h_{k}(J_{1}^{(2)},\dots,J_{n}^{(2)}) p_n$, that is the coefficients $b^k_{\rho}$ with $|\rho| - \ell(\rho) = k$ or $k-1$.
Indeed, an immediate induction shows that if the degree condition $|\rho| - \ell(\rho) \leq k$ is not satisfied, then $b^k_{\rho} =0$.
We can also recover the following result proved by S. Matsumoto \cite[Theorem 5.4]{MatsumotoOddJM}.
\begin{proposition}\label{PropDominant}
 If $\rho$ is a partition and $k$ an integer such that $|\rho| - \ell(\rho) = k$, then
 \[b^k_{\rho} = \prod \Cat_{\rho_{i}-1}.\]
\end{proposition}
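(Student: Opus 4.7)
The plan is to prove the formula by induction on $|\rho|$, using Theorem~\ref{ThmNewInd2} and the fact that coefficients $b^{k'}_\mu$ with $|\mu|-\ell(\mu) > k'$ vanish (the ``degree condition'' mentioned just before the statement). The base case is $\rho=(1^n)$, $k=0$: the element $h_0(J^{(2)}_1,\dots,J^{(2)}_n)p_n = p_n$ is supported on the hyperoctahedral group, which is exactly $\cl^{(2)}_{(1^n)}$, so $b^0_{(1^n)}=1=\prod\Cat_0$. I would also record separately that any partition with $|\rho|-\ell(\rho)=k$ can be stripped down to $(1^{k})$ by iteratively removing a part, so induction on $|\rho|$ is available.

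For the induction step, split on whether $\rho$ has a part equal to $1$. If $\rho = \rho' \cup (1)$, apply equation~\eqref{EqNewInd2} with $m=1$. The sum over $r+s=m$ is empty and the $(m-1)$-term vanishes, so only two contributions remain; the ``join'' terms $b^{k-1}_{\rho' \setminus (\rho'_i) \cup (\rho'_i+1)}$ involve a partition of degree $|\rho'|+1-\ell(\rho') = k+1 > k-1$, hence vanish by the degree condition. One is left with $b^k_{\rho' \cup (1)} = b^k_{\rho'}$, and since $\Cat_0=1$ the induction hypothesis finishes this case.

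If $\rho$ has no part equal to $1$, write $\rho = \rho' \cup (m)$ with $m\geq 2$. In equation~\eqref{EqNewInd2}, the $\delta_{m,1}$-term disappears, the ``join'' terms have degree $k+1$ and vanish, and the $(m-1)\,b^{k-1}_{\rho'\cup(m)}$ term has degree $k>k-1$ and also vanishes. Only the ``cut'' contribution survives, giving
\[ b^k_\rho = \sum_{\substack{r+s=m\\ r,s\geq 1}} b^{k-1}_{\rho'\cup(r,s)}. \]
The partition $\rho'\cup(r,s)$ has degree exactly $k-1$, so by induction $b^{k-1}_{\rho'\cup(r,s)} = \Cat_{r-1}\Cat_{s-1}\prod_i \Cat_{\rho'_i-1}$. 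Factoring out $\prod_i \Cat_{\rho'_i-1}$ and invoking the Catalan recursion $\Cat_{m-1} = \sum_{r+s=m,\ r,s\geq 1} \Cat_{r-1}\Cat_{s-1}$ yields $b^k_\rho = \Cat_{m-1}\prod_i \Cat_{\rho'_i-1} = \prod_i \Cat_{\rho_i-1}$, as desired.

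The only delicate point is the bookkeeping of degrees to see that in each application of Theorem~\ref{ThmNewInd2} exactly the ``right'' family of terms survives (the ``join'' and repetition terms overshoot the allowed degree while the ``cut'' terms are exactly at the right degree). Once that is clear, the Catalan recursion makes the formula appear automatically, and no further machinery is needed.
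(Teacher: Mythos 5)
Your overall strategy is exactly the one the paper has in mind: the text only asserts that Proposition~\ref{PropDominant} ``can be recovered'' from Theorem~\ref{ThmNewInd2} together with the vanishing $b^{k'}_\mu=0$ for $|\mu|-\ell(\mu)>k'$, and your degree bookkeeping (join terms and the $(m-1)b^{k-1}_{\rho'\cup(m)}$ term overshoot by one or two, the cut terms land exactly at degree $k-1$) is correct, as is the base case $p_n=\cl^{(2)}_{(1^n)}$ and the use of the Catalan recursion $\Cat_{m-1}=\sum_{r+s=m}\Cat_{r-1}\Cat_{s-1}$.

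There is, however, one genuine flaw: the induction is not well-founded as you set it up. You announce an induction on $|\rho|$, but in the case where $\rho=\rho'\cup(m)$ has no part equal to $1$, the surviving terms are $b^{k-1}_{\rho'\cup(r,s)}$ with $r+s=m$, and $|\rho'\cup(r,s)|=|\rho'|+m=|\rho|$ --- the size does not decrease, so ``by induction'' is not justified at that step. (Your parenthetical claim that $\rho$ can be ``stripped down to $(1^k)$ by removing parts'' is also not what happens: the reduction \emph{splits} a part into two, it does not remove it.) The fix is cheap: what does decrease in the cut case is the degree $k=|\rho|-\ell(\rho)$, while in the $m=1$ case $k$ is preserved but the size drops. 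So run the same double induction the paper uses for Theorem~\ref{ThmSubLeading2} (first on $|\rho|$, then on the smallest part of $\rho$, noting $r,s<m$), or more simply a single induction on $|\rho|+k$, which drops by exactly $1$ in both branches. With that adjustment the proof is complete.
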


 But our induction allows us to go further and to compute the subleading term
 (case $|\rho| - \ell(\rho) = k-1$), proving this way a conjecture of S.
 Matsumoto \cite[Conjecture 9.4]{MatsumotoOddJM} corresponding to the case
 where $\rho$ is a hook.
 
 Before stating and proving our result
 (in paragraph~\ref{SubsubsectProofMatsumoto}),
 we need a few definitions and basic lemmas
 on the total area of Dyck paths (paragraph~\ref{SubsubsectArea}).
 
 \subsubsection{Area of Dyck paths}\label{SubsubsectArea}
 
 \begin{definition}
If $I=(i_{1},\dots,i_{r})$ is a weak composition (\textit{i.e.} a sequence of non-negative integers), let us define $\paths_I$ as the set of Dyck paths of length $k=i_{1}+ \dots + i_{r}$ whose height after $i_{1}$, $i_{1}+i_{2}$, \ldots steps is zero
(such a path is the concatenation of Dyck paths of lengths $i_{1}$, $i_2$,\ldots).

If $C$ is a subset of Dyck paths of a given length, denote by $\Area_{C}$
the sum over the paths $c$ in $C$ of the area $\Area_c$ under $c$.
In the case $C=\paths_{I}$,
we shorten the notation and denote $\Area_I=\Area_{\paths_{I}}$.
 \end{definition}
 
For a weak composition $I=(k)$ of length $1$, the set $\paths_I$
is the set of all Dyck paths of length $k$.
 In this case, the area $\Area_k$ has a closed form,
 which has been computed by D. Merlini, R. Sprugnoli, and M. C. Verri
 in \cite{CatalanPathArea}:
 \[\Area_k =  4^k - \binom{2k+1}{k}.\]
 
 The general case can be deduced easily, thanks to the following lemma:
\begin{lemma}
 Let $C_{1}$ and $C_{2}$ be subsets of the set of Dyck paths of length $2m$ and $2n$, respectively.
 Define $C \simeq C_{1} \times C_{2}$ to be the set of Dyck paths of length $2(m+n)$
 which are the concatenation of a path in $C_{1}$ and a path in $C_{2}$. Then 
 \[\Area_{C} = \Area_{C_{1}} \cdot |C_{2}| + |C_{1}| \cdot \Area_{C_{2}}.\]
\end{lemma}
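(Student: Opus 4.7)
The statement is essentially a counting identity about concatenations, and the proof plan is direct. The bijection $C \simeq C_1 \times C_2$ is already built into the definition: each path $c \in C$ decomposes uniquely as $c = c_1 \cdot c_2$ with $c_1 \in C_1$ and $c_2 \in C_2$, since $c$ touches the $x$-axis at position $2m$ and the two halves are, by hypothesis, Dyck paths in $C_1$ and $C_2$ respectively.

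The key geometric observation is that the area is additive under concatenation at height zero. If $c_1$ is a Dyck path of length $2m$ (ending at height $0$) and $c_2$ is a Dyck path of length $2n$ (starting at height $0$), then the region under $c_1 \cdot c_2$ is the disjoint union of the region under $c_1$ (over the interval $[0, 2m]$) and a horizontal translate of the region under $c_2$ (over the interval $[2m, 2(m+n)]$). Therefore $\Area_{c_1 \cdot c_2} = \Area_{c_1} + \Area_{c_2}$.

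Combining these two observations, I would write
\[
\Area_C \;=\; \sum_{c \in C} \Area_c \;=\; \sum_{(c_1,c_2) \in C_1 \times C_2} \bigl( \Area_{c_1} + \Area_{c_2} \bigr),
\]
and then split the double sum into two pieces, factoring the sum that does not depend on the running index:
\[
\sum_{(c_1,c_2)} \Area_{c_1} = |C_2| \sum_{c_1 \in C_1} \Area_{c_1} = |C_2| \cdot \Area_{C_1},
\]
and symmetrically for the other term. Adding these gives the claimed identity.

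There is no real obstacle here; the only thing to be careful about is making the bijection and the additivity of area fully explicit, so that the reader sees that no hidden overcounting or boundary contribution creeps in at the junction point $x = 2m$. Since both half-paths meet the axis there, that midpoint contributes area zero to both decomposition and recombination, so the argument is clean.
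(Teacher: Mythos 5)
Your proof is correct and follows essentially the same route as the paper: additivity of the area under concatenation at height zero, followed by splitting the double sum over $C_1 \times C_2$ and factoring out $|C_1|$ and $|C_2|$. The extra care you take in justifying the bijection and the junction at $x=2m$ is fine but not needed beyond what the paper already states.
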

\begin{proof}
    The area under a concatenation $c_1 \cdot c_2$ of two Dyck paths $c_1$
    and $c_2$ is clearly equal to the sum of the areas under $c_1$ and $c_2$.
    Therefore:
    \begin{multline*}
        \Area_C=\sum_{c_1 \in C_1 \atop c_2 \in C_2} \Area_{c_1}+\Area_{c_2}
        =\sum_{c_1 \in C_1 \atop c_2 \in C_2} \Area_{c_1}
        + \sum_{c_1 \in C_1 \atop c_2 \in C_2} \Area_{c_2} \\
    = |C_2| \sum_{c_1 \in C_1} \Area_{c_1} + |C_1|\sum_{c_2 \in C_2} \Area_{c_2}
    = \Area_{C_{1}} \cdot |C_{2}| + |C_{1}| \cdot \Area_{C_{2}}.
    \qedhere \end{multline*}
\end{proof}
A classical result in enumerative combinatorics, perhaps the most widely known,
states that the cardinality of $\paths_{(k)}$ is given by the Catalan number
$\Cat_k := \frac{1}{k+1} \binom{2k}{k}$.

An immediate induction using the lemma above gives the following corollary.
\begin{corollary}
For any weak composition $I$ of length $r$, one has:
 \[\Area_I = \sum_{j=1}^{r} \Area_{i_{j}} \prod_{k \neq j} \Cat_{i_k}. \]
 \end{corollary}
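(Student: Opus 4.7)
The plan is a direct induction on the length $r$ of the weak composition $I=(i_1,\dots,i_r)$, using the lemma immediately above as the engine and the identity $|\paths_{(k)}|=\Cat_k$ as the building block for the products.

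The base case $r=1$ is exactly the definition $\Area_{(k)}=\Area_k$, where the empty product of Catalan numbers equals $1$, so the formula holds trivially. For the inductive step with $r\geq 2$, I would observe that by definition every path in $\paths_I$ is the concatenation of a Dyck path of length $i_1$ and a Dyck path of length $i_2+\cdots+i_r$ that itself touches zero at the prescribed intermediate heights. In other words, $\paths_I\simeq \paths_{(i_1)}\times \paths_{I'}$ where $I'=(i_2,\dots,i_r)$. Applying the preceding lemma gives
\[
\Area_I = \Area_{(i_1)}\cdot |\paths_{I'}| + |\paths_{(i_1)}|\cdot \Area_{I'}.
\]

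Now I would use the multiplicativity $|\paths_{I'}|=\prod_{k\geq 2}\Cat_{i_k}$ (again a trivial induction from the same concatenation decomposition, since the number of paths factorizes) together with $|\paths_{(i_1)}|=\Cat_{i_1}$, and feed in the induction hypothesis
\[
\Area_{I'} = \sum_{j=2}^{r}\Area_{i_j}\prod_{\substack{k\neq j\\ k\geq 2}}\Cat_{i_k}.
\]
Substituting back yields
\[
\Area_I = \Area_{i_1}\prod_{k\neq 1}\Cat_{i_k} + \Cat_{i_1}\sum_{j=2}^{r}\Area_{i_j}\prod_{\substack{k\neq j\\ k\geq 2}}\Cat_{i_k}
= \sum_{j=1}^{r}\Area_{i_j}\prod_{k\neq j}\Cat_{i_k},
\]
which is the desired identity.

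There is really no obstacle here: the only thing to be slightly careful about is the bookkeeping of indices in the product and the parallel (but separately proved) induction showing $|\paths_I|=\prod_k \Cat_{i_k}$, which itself follows from the concatenation bijection. Both inductions are one-liners once the decomposition $\paths_I\simeq \paths_{(i_1)}\times \paths_{(i_2,\dots,i_r)}$ is recorded.
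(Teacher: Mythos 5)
Your proof is correct and is exactly the ``immediate induction using the lemma above'' that the paper invokes: decompose $\paths_I\simeq\paths_{(i_1)}\times\paths_{(i_2,\dots,i_r)}$, apply the concatenation lemma, and use $|\paths_{I'}|=\prod_k\Cat_{i_k}$ together with the inductive hypothesis. No difference in approach and no gaps.
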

 
One will also need the following induction relation in the next paragraph.
\begin{lemma}[Merlini, Sprugnoli and Verri \cite{CatalanPathArea}]
    \label{LemArea}
 If $m$ is a positive integer, one has:
 \[ \Area_{m-1} = (m-1) \Cat_{m-1} + \sum_{r+s=m \atop r,s \geq 1} \left( \Area_{r-1} \Cat_{s-1} +  \Area_{s-1} \Cat_{r-1} \right).  \]
\end{lemma}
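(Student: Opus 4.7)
The plan is to decompose a Dyck path of length $2(m-1)$ according to its first return to the $x$-axis. Every non-empty Dyck path $P$ of length $2(m-1)$ factors uniquely as $P = U P_1 D P_2$, where $U$ is the initial up-step, $D$ is the matched down-step (the first step that brings the path back to height $0$), $P_1$ is a Dyck path of length $2(r-1)$, and $P_2$ is a Dyck path of length $2(s-1)$, with $r,s \geq 1$ and $r+s=m$. This is the classical first-return factorization underlying the recurrence $\Cat_{m-1} = \sum_{r+s=m,\, r,s\geq 1} \Cat_{r-1}\Cat_{s-1}$.

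The next step is to compute how the area transforms under this decomposition. Under the convention compatible with the closed form $\Area_k = 4^k - \binom{2k+1}{k}$ (equivalently, $\Area_c$ is the sum of the heights of $c$ at the integer abscissas), a direct verification gives
\[\Area_{U P_1 D P_2} = (2r-1) + \Area_{P_1} + \Area_{P_2}.\]
Indeed, under the outer arch, $P_1$ is lifted on a platform of height $1$ over the $2r-1$ intermediate abscissas $1,\dots,2r-1$, which accounts for the extra $(2r-1)$, while the heights at the endpoints of that arch contribute zero.

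Summing over all Dyck paths of length $2(m-1)$ then yields
\[\Area_{m-1} = \!\!\!\sum_{r+s=m,\, r,s\geq 1}\!\!\! \Bigl[ (2r-1)\Cat_{r-1}\Cat_{s-1} + \Area_{r-1}\Cat_{s-1} + \Cat_{r-1}\Area_{s-1} \Bigr].\]
To finish, I would apply the $r \leftrightarrow s$ symmetry to the first sum and then invoke Catalan's recursion:
\[\sum_{r+s=m} (2r-1)\Cat_{r-1}\Cat_{s-1} = \tfrac{1}{2}\!\!\sum_{r+s=m}\!\! \bigl((2r-1)+(2s-1)\bigr)\Cat_{r-1}\Cat_{s-1} = (m-1)\Cat_{m-1}.\]
Substituting back gives the identity of the lemma.

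The only delicate point is verifying the area decomposition $\Area_{UP_1 D P_2} = (2r-1)+\Area_{P_1}+\Area_{P_2}$: one has to be careful to use the area convention that matches $\Area_k = 4^k - \binom{2k+1}{k}$, as any off-by-one or shift could change the additive constant. Once that identity is established, the rest is a routine manipulation of the Catalan recursion together with a symmetrization trick.
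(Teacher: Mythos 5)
Your proof is correct and follows essentially the same route as the paper: the first-return factorization $P = U P_1 D P_2$, the area increment $\Area_{UP_1DP_2} = (2r-1)+\Area_{P_1}+\Area_{P_2}$, symmetrization in $r\leftrightarrow s$, and the Catalan recursion. The only difference is that you explicitly justify the $(2r-1)$ shift via the sum-of-heights convention, which the paper handles with a figure.
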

\begin{proof}
This is a consequence of the usual first return decomposition of Dyck paths.
Indeed, let $c$ be a Dyck path of length $2(m-1)$.
We denote $2r$ the $x$-coordinate of the first point where the path touches the 
$x$-axis and $s=m-r$.
Then $c$ is the concatenation of one climbing step, a Dyck path $c_{1}$ of length $2(r-1)$, a down step and a Dyck path $c_{2}$ of length $2(s-1)$ and this decomposition is of course bijective.

\begin{figure}[ht]
\[\includegraphics[width=6cm]{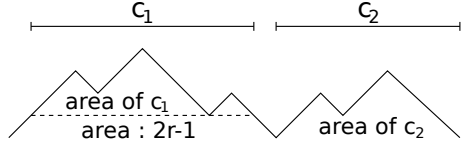}\]
\caption{First-passage decomposition of a Dyck path}
\label{FigCatalan} 
\end{figure}

The area under $c$ is the sum of the areas under $c_{1}$ and $c_{2}$, plus $2r-1$ (see figure~\ref{FigCatalan}). So we write:
\begin{multline*} \Area_{m-1} = \sum_{r+s=m \atop r,s \geq 1} 
\left[ \sum_{c_1 \in \paths_{r-1} \atop c_2 \in \paths_{s-1}}
\Area_{c_1} + \Area_{c_2}+(2r-1) \right] \\
= \sum_{r+s=m \atop r,s \geq 1} 
\left[ \sum_{c_1 \in \paths_{r-1} \atop c_2 \in \paths_{s-1}} \Area_{c_1}
+ \sum_{c_1 \in \paths_{r-1} \atop c_2 \in \paths_{s-1}} \Area_{c_2}
+ \sum_{c_1 \in \paths_{r-1} \atop c_2 \in \paths_{s-1}} (2r-1)  \right] \\
= \sum_{r+s=m \atop r,s \geq 1}                                            
\big[ |\paths_{s-1}| \sum_{c_1 \in \paths_{r-1}} \Area_{c_1} 
+ |\paths_{r-1}| \sum_{c_2 \in \paths_{s-1}} \Area_{c_2}
+ |\paths_{s-1}| \cdot |\paths_{r-1}| \cdot (2r-1) \big] \\
= \sum_{r+s=m \atop r,s \geq 1} \big[ \Area_{r-1} \Cat_{s-1} + 
\Area_{s-1} \Cat_{r-1} + (2r-1) \Cat_{s-1} \Cat_{r-1} \big]. 
\end{multline*}
The last part of the sum may be symmetrized in $r$ and $s$: 
\begin{multline*}
 \sum_{r+s=m \atop r,s \geq 1} (2r-1) \Cat_{s-1} \Cat_{r-1} =  \sum_{r+s=m \atop r,s \geq 1} \frac{1}{2} (2r-1 + 2s -1) \Cat_{s-1} \Cat_{r-1}\\
  = (m-1)  \sum_{r+s=m \atop r,s \geq 1} \Cat_{s-1} \Cat_{r-1} = (m-1) \Cat_{m-1},
\end{multline*} 
which ends the proof of the lemma.
\end{proof}

\subsubsection{Proof of a conjecture of Matsumoto}\label{SubsubsectProofMatsumoto}
Computing the subleading term of $h_{k}(J_{1}^{(2)},\dots,J_{n}^{(2)})\cdot p_n$ 
consists in computing the coefficient $b_\mu^k$ with $k=|\mu| - \ell(\mu)+1$.
Therefore, for a partition $\mu$, we denote:
\[\SD_\mu = b_\mu^{|\mu|-\ell(\mu)+1}.\]
We also denote by $\mu-\bm{1}$ the sequence $\mu_1-1,\mu_2-1,\dots,\mu_{\ell(\mu)}-1$.
As some terms of the sequence can be equal to 0, this is not necessarily a partition,
but it is a weak composition.

\begin{theorem}\label{ThmSubLeading2}
 Let $\mu$ be a partition.  Then
 \[ \SD_{\mu} = \Area_{\mu-\bm{1}}. \]
\end{theorem}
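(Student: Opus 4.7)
The plan is to use Theorem~\ref{ThmNewInd2} with $k = |\mu|-\ell(\mu)+1$ and exploit the vanishing $b^k_\lambda = 0$ whenever $|\lambda|-\ell(\lambda) > k$ together with the leading-term formula of Proposition~\ref{PropDominant}. Writing $\mu = \rho \cup (m)$, so that $k = |\rho|-\ell(\rho)+m$, I would inspect each term on the right-hand side of~\eqref{EqNewInd2}: the first term $\delta_{m,1}b^k_\rho$ gives $\delta_{m,1}\SD_\rho$ (when $m=1$ one has $|\rho|-\ell(\rho)=k-1$); the second sum vanishes, because each partition $\rho\setminus(\rho_i)\cup(\rho_i+m)$ has $|\cdot|-\ell(\cdot)=k > k-1$; the third sum equals $\sum_{r+s=m}\SD_{\rho\cup(r,s)}$, since each $\rho\cup(r,s)$ has $|\cdot|-\ell(\cdot)=k-2$; and $(m-1)b^{k-1}_{\rho\cup(m)}$ sits at the leading degree $|\mu|-\ell(\mu)=k-1$, so Proposition~\ref{PropDominant} yields $(m-1)\prod_i\Cat_{\mu_i-1}$. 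Collecting,
$$\SD_{\rho\cup(m)} = \delta_{m,1}\SD_\rho + \sum_{r+s=m,\,r,s\geq 1}\SD_{\rho\cup(r,s)} + (m-1)\prod_i\Cat_{\mu_i-1}.$$

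Then I would verify that $\Area_{\mu-\bm{1}}$ satisfies the same recurrence, by induction on the largest part $m$ of $\mu$. Using the corollary,
$$\Area_{(\rho-\bm{1},\,m-1)} = \Bigl(\sum_j \Area_{\rho_j-1}\prod_{k\neq j}\Cat_{\rho_k-1}\Bigr)\Cat_{m-1} + \Area_{m-1}\prod_k\Cat_{\rho_k-1},$$
and analogously for each $\Area_{\rho\cup(r,s)-\bm{1}}$. Summing over $r+s=m$, the Catalan convolution $\sum_{r+s=m}\Cat_{r-1}\Cat_{s-1}=\Cat_{m-1}$ reassembles the ``parts-of-$\rho$'' contributions, while Lemma~\ref{LemArea} supplies
$$\sum_{r+s=m}\bigl(\Area_{r-1}\Cat_{s-1}+\Area_{s-1}\Cat_{r-1}\bigr) = \Area_{m-1}-(m-1)\Cat_{m-1}.$$
The residual deficit $(m-1)\Cat_{m-1}\prod_k\Cat_{\rho_k-1}$ is exactly cancelled by the leading-term correction $(m-1)\prod_i\Cat_{\mu_i-1}$ appearing in the recurrence for $\SD$, recovering $\Area_{(\rho-\bm{1},m-1)}$ on the nose.

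For the base case $\mu = 1^n$ one has $\SD_{1^n} = 0$, since no transposition appearing in $h_1(J^{(2)}_1,\ldots,J^{(2)}_n)$ lies in $H_n$; this matches $\Area_{(0,\ldots,0)}=0$, the empty Dyck path being the only element of $\paths_{(0,\ldots,0)}$. The reduction step for $m=1$ in the recurrence is immediate from the trivial identity $\Area_{(I,0)}=\Area_I$, which matches $\SD_{\rho\cup(1)}=\SD_\rho$. The main obstacle is orchestrating the cancellation between the leading-term correction $(m-1)\prod_i\Cat_{\mu_i-1}$ and the deficit in the Catalan-Dyck recursion, but Lemma~\ref{LemArea} was proved precisely to supply this identity; everything else is a routine expansion via the corollary for $\Area_I$.
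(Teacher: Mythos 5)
Your derivation of the recurrence
\[
\SD_{\rho \cup (m)} = \delta_{m,1}\,\SD_\rho + (m-1)\Cat_{m-1}\prod_i \Cat_{\rho_i-1} + \sum_{r+s=m,\ r,s\geq 1} \SD_{\rho\cup(r,s)}
\]
from Theorem~\ref{ThmNewInd2}, the degree bound and Proposition~\ref{PropDominant} is exactly the paper's, and your verification that $\Area_{\mu-\bm{1}}$ satisfies the same recurrence (Catalan convolution for the parts of $\rho$, Lemma~\ref{LemArea} absorbing the $(m-1)\Cat_{m-1}$ correction) is also the paper's computation. All the substantive steps are correct.

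The one genuine flaw is the organization of the induction. You propose induction ``on the largest part $m$ of $\mu$,'' but the terms $\SD_{\rho\cup(r,s)}$ in the recurrence have the \emph{same size} as $\mu$, and when the largest part of $\mu$ is repeated (e.g.\ $\mu=(3,3)$, $\rho=(3)$) the partition $\rho\cup(r,s)$ still has largest part $m$, so the induction hypothesis as you state it does not apply and the recursion is not well-founded. Likewise your $m=1$ reduction $\SD_{\rho\cup(1)}=\SD_\rho$ decreases the size but not the largest part, so it needs a separate inductive parameter. The paper resolves this by a double induction: outer induction on $|\mu|$ (which handles the parts equal to $1$), inner induction on the \emph{smallest} part $m$ of $\mu$ --- with that choice every $\rho\cup(r,s)$ has smallest part $\min(r,s)<m$ and the same size, so the recursion terminates. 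Your argument is repaired by simply swapping ``largest'' for ``smallest'' and adding the outer induction on size; nothing else changes.
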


\begin{proof}
 Let $\mu$ be a partition and $k=|\mu| - \ell(\mu)+1$.

 Suppose that we write $\mu = \rho \cup (m)$, for some partition $\rho$ and
 integer $m$.
 We will write Theorem \ref{ThmNewInd2} for $\rho$ and $m$.
\begin{equation}\label{EqNewInd2}
 b^{k}_{\rho \cup (m)} = \delta_{m,1} b^k_{\rho} +
 2 \sum_{1 \leq i \leq \ell(\rho)} \rho_i
 b^{k-1}_{\rho \setminus (\rho_{i}) \cup (\rho_{i}+m)}
 + \sum_{r+s=m \atop r,s \geq 1} b^{k-1}_{\rho \cup (r,s)} 
 + (m-1) b^{k-1}_{\rho \cup (m)}.
\end{equation}

 If $m=1$, the partition $\rho$ fulfills
 \[|\rho| - \ell(\rho) = (|\mu|-1) - (\ell(\mu)-1) = k-1\]
 and thus $b^k_\rho=\SD_\rho$.

 For any $i \leq \ell(\rho)$, the partition 
 $\lambda = \rho \setminus (\rho_{i}) \cup (\rho_{i}+m)$ fulfills
 \[|\lambda|-\ell(\lambda) = |\mu| - (\ell(\mu) -1) = k\]
and therefore, thanks to the degree condition,
$b_{\lambda}^{k-1} = 0.$

In a similar way, as $|\mu| - \ell(\mu)=k-1$, the coefficient $b^{k-1}_\mu$ is 
simply given by Proposition \ref{PropDominant}:
\[ b_{k-1}^\mu = \prod_{i=1}^{\ell(\mu)} \Cat_{\mu_i-1}=\Cat_{m-1}
\prod_{i=1}^{\ell(\rho)} \Cat_{\rho_i-1}. \]

For the last term, for any $r,s \geq 1$ with $r+s=m$,
the partition $\lambda=\rho \cup (r,s)$ fulfills:
\[|\lambda|-\ell(\lambda) = |\mu| - (\ell(\mu) +1) = k-2.\]
Therefore, the coefficient $b_{\lambda}^{k-1}$ corresponds to a subleading
term and $b_{\lambda}^{k-1}=\SD_\lambda$.

Finally, Theorem \ref{ThmNewInd2} becomes in this case:
\begin{equation}\label{EqIndSD}
    \SD_{\rho \cup (m)} = \delta_{m,1} \SD_\rho +
    (m-1) \Cat_{m-1} \prod_{i=1}^{\ell(\rho)} \Cat_{\rho_i-1}
    + \sum_{r,s \geq 1 \atop r+s=m} \SD_{\rho \cup (r,s)}
\end{equation}

This equation gives an induction relation on the coefficient $\SD_\rho$.
We will prove that $\SD_\mu=\Area_{\mu-\bm{1}}$ by a double induction,
first on the size $n$ of the partition $\mu$ and then on the smallest part of
$\mu$

 For $n=1$, one has only the partition $\mu=(1)$ and 
 $\SD_{(1)}=b^1_{(1)}=0=\Area_0$.

Fix now some $n>1$ and suppose that the theorem is true for all partitions of
size smaller than $n$.

If $\mu = \rho \cup (1)$ is a partition of $n$ with smallest part equal to $1$,
then, by equation \eqref{EqIndSD} and the induction hypothesis, one has:
\[\SD_{\mu} = \SD_\rho = \Area_{\rho-1}=\Area_{\mu-\bm{1}}.\]

Let $\mu$ be a partition of $n$ with smallest part $m>1$ and 
suppose that $\SD_\mu=\Area_{\mu-\bm{1}}$ for all partitions of $n$ with smallest
part $m'<m$.
We write $\mu=\rho \cup (m)$ ({\it i.e.} $\rho=\mu \backslash m$).
By equation \eqref{EqIndSD}, 
\[\SD_{\mu} = (m-1) \Cat_{m-1} \prod \Cat_{\rho_{i}-1}
+ \sum_{r+s=m \atop r,s \geq 1} \SD_{\rho \cup (r,s)}. \]
By induction,
\begin{multline*}
\SD_{\rho \cup (r,s)} = \Area_{(\rho \cup (r,s)) -1}
= \Cat_{r-1} \Cat_{s-1} \left(\sum_{i} \Area_{\rho_{i} - 1} \prod_{j \neq i} \Cat_{\rho_{j}-1}\right) \\ +
\Area_{s-1} \Cat_{r-1} \prod_{i} \Cat_{\rho_{i} - 1} + 
\Area_{r-1} \Cat_{s-1} \prod_{i} \Cat_{\rho_{i} - 1} .
\end{multline*}
If we make the substitution in the previous equation, we obtain:
\begin{multline*}
    \SD_{\mu} =\left(\sum_{r,s \geq 1 \atop r+s=m} \Cat_{r-1} \Cat_{s-1} \right) \left(\sum_{i} \Area_{\rho_{i} - 1}
 \prod_{j \neq i} \Cat_{\rho_{j}-1}\right) \\
+ \left((m-1)\Cat_{m-1} + \sum_{r+s=m \atop r,s \geq 1} \big[ \Area_{s-1} \Cat_{r-1} + \Area_{r-1} \Cat_{s-1}\big]\right) \prod_{i} \Cat_{\rho_{i} - 1} .
\end{multline*}
Therefore, using both Lemma~\ref{LemArea} and
the classical induction on Catalan number
$\sum_{r+s=m} \Cat_{r-1} \Cat_{s-1}=\Cat_{m-1}$, one has:
\[\SD_{\mu} = \sum_{i} \Area_{\mu_{i} - 1}
\prod_{j \neq i} \Cat_{\mu_{j}-1} = \Area_{\mu - \bm{1}}. \]
Finally, for any partition $\mu$, one has $\SD_{\mu} =\Area_{\mu-\bm{1}}$, which is 
exactly what we wanted to prove.
\end{proof}

S. Matsumoto established a deep connection between the coefficients $b^{k}_{\mu}$ and the asymptotic expansion of orthogonal Weingarten functions \cite[Theorem 7.3]{MatsumotoOddJM}.
In particular, Theorem~\ref{ThmSubLeading2} gives the subleading term of some matrix integrals over orthogonal group when the dimension of the group goes to infinity.

\section{Towards a continuous deformation?}\label{SectGeneralisation}
The questions studied in sections~\ref{SectSymGrpAlg} and~\ref{SectDoubleClass} may seem quite different at first sight but there exists a continuous deformation from one to the other.

We denote by $\Young_{n}$ the set of Young diagrams (or partitions) of size $n$.
For any $\alpha > 0$, we consider two families of functions on $\Young_{n}$.
\begin{itemize}
  \item First, we call $\alpha$-content of a box of the Young diagram $\lambda$ the quantity $\alpha(j-1) - (i-1)$, where $i$ is its row index and $j$ its column index.
  If $A^{(\alpha)}_{\lambda}$ stands for the multiset of the $\alpha$-contents
  of boxes of$\lambda$, one can look at the evaluation of
  complete symmetric functions $h_{k}(A^{(\alpha)}_{\lambda})$.
  \item Second, we consider Jack polynomials, which is the basis of symmetric function ring indexed by partitions and depending of a parameter $\alpha$ (they are deformations of Schur functions).
  The expansion of Jack polynomials on the power sum basis
  \[J_{\lambda}^{(\alpha)} = \sum_\mu \theta^{(\alpha)}_{\mu}(\lambda) p_{\mu} \]
  defines a family $\theta^{(\alpha)}_{\mu}$ of functions on $\Young_n$
  (we use the same normalization and notation as in \cite[Chapter 6]{McDo} for Jack polynomials).
\end{itemize}

  \begin{proposition}
The functions $\theta^{(\alpha)}_{\mu}$, when $\mu$ runs over the partitions of $n$, form a basis of the algebra $Z_{n}$ of functions over $\Young_{n}$.
\end{proposition}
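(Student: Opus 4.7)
The plan is to reduce the statement to the well-known fact that the Jack polynomials and the power-sum symmetric functions are two bases of the same space. Since $Z_n$ is the algebra of functions on the finite set $\Young_n$ (with pointwise product), its dimension equals $|\Young_n| = p(n)$, which is precisely the number of partitions of $n$ and hence the size of the family $\{\theta^{(\alpha)}_\mu\}_{\mu \vdash n}$. Therefore it is enough to establish linear independence; the spanning property follows automatically by counting dimensions.

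To prove linear independence, I would read the defining identity
\[ J^{(\alpha)}_\lambda = \sum_{\mu \vdash n} \theta^{(\alpha)}_\mu(\lambda)\, p_\mu \]
as saying that the square matrix $M = \bigl(\theta^{(\alpha)}_\mu(\lambda)\bigr)_{\lambda, \mu \vdash n}$ is the transition matrix from the Jack basis $(J^{(\alpha)}_\lambda)_{\lambda\vdash n}$ to the power-sum basis $(p_\mu)_{\mu\vdash n}$ of the degree-$n$ part of the ring of symmetric functions over $\mathbb{Q}(\alpha)$. Since both families are $\mathbb{Q}(\alpha)$-bases of this finite-dimensional space (this is part of the standard theory of Jack polynomials, see \cite[Chapter 6]{McDo}), the matrix $M$ is invertible.

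Now suppose a relation $\sum_{\mu \vdash n} c_\mu\, \theta^{(\alpha)}_\mu = 0$ holds in $Z_n$. Evaluating at an arbitrary partition $\lambda \vdash n$ gives
\[ \sum_{\mu \vdash n} M_{\lambda,\mu}\, c_\mu = 0 \qquad \text{for every } \lambda \vdash n, \]
which is the linear system $M c = 0$ with $c = (c_\mu)_{\mu \vdash n}$. The invertibility of $M$ forces $c = 0$, proving linear independence, and hence that $\{\theta^{(\alpha)}_\mu\}_{\mu \vdash n}$ is a basis of $Z_n$.

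There is no real obstacle here; the only point to be careful about is clarifying that the claim concerns the underlying vector space structure of $Z_n$ (the algebra structure being pointwise multiplication is not used), and that the invertibility of $M$ is a standard transitivity property of two bases of the same finite-dimensional vector space, not a deeper positivity or integrality statement.
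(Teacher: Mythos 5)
Your proof is correct and is essentially the same as the paper's: both reduce the claim, via the dimension count $\dim Z_n = p(n)$, to the invertibility of the matrix $M=\bigl(\theta^{(\alpha)}_\mu(\lambda)\bigr)_{\lambda,\mu\vdash n}$, which holds because it is the transition matrix between the Jack and power-sum bases of the degree-$n$ symmetric functions. The paper checks the spanning property by explicitly producing $c=(c_\mu)$ from the inverse transition matrix so that $f(\lambda)=\sum_\mu c_\mu\,\theta^{(\alpha)}_\mu(\lambda)$, while you check linear independence from $Mc=0\Rightarrow c=0$; after the dimension count these are two equivalent faces of the same argument.
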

\begin{proof}
As the cardinality of this family corresponds to the dimension of the space, it is enough to prove that
it spans $Z_n$. Let $f$ be a function on $\Young_n$.

For a fixed $\alpha$, Jack polynomials form a basis of symmetric functions,
therefore there exist some coefficients $d_{\mu,\lambda}^{(\alpha)}$ such that:
\[ p_\mu =\sum_\lambda d_{\mu,\lambda}^{(\alpha)} J_\lambda^{(\alpha)}.\]
Let us define the scalar:
\[ c_\mu=\sum_\lambda d_{\mu,\lambda}^{(\alpha)} f(\lambda).\]
Then one has:
\[
    \sum_\mu c_\mu \theta^{(\alpha)}_{\mu}(\lambda) 
    = \sum_{\mu,\nu} \left( d_{\mu,\nu}^{(\alpha)} \theta^{(\alpha)}_{\mu}(\lambda) \right) f(\nu)
    = f(\lambda),
\]
where the last equality comes from the fact that the matrices $(\theta^{(\alpha)}_{\mu}(\lambda))$
and $(d_{\mu,\lambda}^{(\alpha)})$ are by definition inverse of each other.

Finally, any function $f$ on $\Young_n$ can be written as a linear combination of
$\theta^{(\alpha)}_{\mu}$.
\end{proof}
\begin{remark}
    This proposition is also a consequence of the fact that suitably chosen
    normalizations of $\theta^{(\alpha)}_{\mu}$,
    when $\mu$ runs over all partitions, form a linear basis of the algebra
    of $\alpha$-shifted symmetric functions
    (see \cite[Section 3]{LassalleJackMultirectangular}). However, such
    a sophisticated tool is not needed when $n$ is fixed.
\end{remark}

The proposition implies the existence of some coefficients $a^{k,(\alpha)}_{\mu}$ such that:
\[
h_k(A_\lambda^{(\alpha)}) = \sum_\mu a^{k,(\alpha)}_\mu \theta^{(\alpha)}_\mu(\lambda),
\]

For $\alpha=1$, using the action of Jucys-Murphy element on the Young basis \cite{Jucys1966}
and the discrete Fourier transform of $S_{n}$,
one can see that $a^{k,(1)}_\mu = a^{k}_\mu$.

For $\alpha=2$, using the identification between
Jack polynomials for this special value of the parameter and
zonal polynomials for the Gelfand pair $(S_{2n},H_{n})$ \cite[Chapter 7]{McDo},
as well as the spherical expansion of $h_{k}(J_{1}^{(2)},\dots,J_{n}^{(2)}) p_n$
established by S. Matsumoto \cite[Theorem 4.1]{MatsumotoOddJM},
one has $a^{k,(2)}_\mu = b^{k}_\mu$.
\medskip

It is natural to wonder if there are results similar to Theorems~\ref{ThmNewInd} and~\ref{ThmNewInd2} in the general setting.
Computer exploration using Sage \cite{sage} leads to the following conjecture:
\begin{conjecture}\label{ConjGenAlpha}
 The coefficients $a_\rho^{k,(\alpha)}$ fulfill the linear relation: for any $m \geq 2$,
 \[
a_{\rho \cup (m)}^{k,(\alpha)} = \sum_{r+s=m \atop r,s \geq 1}
a^{k-1,(\alpha)}_{\rho \cup (r,s)} + \alpha \sum_{1\leq i \leq \ell(\rho)}
\rho_i a^{k-1,(\alpha)}_{\rho \backslash \rho_i \cup (\rho_i + m)} +
(\alpha - 1) \cdot (m - 1)\  a_{\rho \cup (m)}^{k-1,(\alpha)}. \label{conj} 
\]
\end{conjecture}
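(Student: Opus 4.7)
The plan is to lift the combinatorial argument used in the proofs of Theorems~\ref{ThmNewInd} and~\ref{ThmNewInd2} to the purely algebraic setting of Jack polynomials and $\alpha$-shifted symmetric functions, in the spirit of Lassalle's extension of Theorem~\ref{ThmLassalle} to arbitrary $\alpha$ (see \cite[section 11]{LassalleJM}). The coefficients $a_\mu^{k,(\alpha)}$ should be realized as matrix entries, in a suitable basis, of the operator of multiplication by $h_k(A^{(\alpha)})$ acting on the algebra $Z_n$ of functions on $\Young_n$, and the conjectured three-term identity should follow by writing this operator as a sum of elementary operators with clean actions on the Jack basis $\theta_\mu^{(\alpha)}$.

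First I would seek an $\alpha$-analog of the induction~\eqref{EqRecH}: at the algebraic level this should take the form of an identity expressing multiplication by $h_k(A^{(\alpha)}_\lambda)$ in terms of multiplication by $h_{k-1}(A^{(\alpha)}_\lambda)$ composed with an ``add a content'' operator playing the role of $J_{n+1}$. The next step is to extract the coefficient of a fixed $\theta_{\rho\cup(m)}^{(\alpha)}$, mirroring the role of $[\sigma]$ for a permutation of type $\rho\cup(m)$ in the proof of Theorem~\ref{ThmNewInd}. The ``join'' part of that proof should translate into the summand $\alpha\sum_i\rho_i\, a^{k-1,(\alpha)}_{\rho\setminus\rho_i\cup(\rho_i+m)}$, with the factor $\alpha$ arising from a Jack--Pieri weight; the ``cut'' part should produce the summation $\sum_{r+s=m} a_{\rho\cup(r,s)}^{k-1,(\alpha)}$ from a Pieri-type splitting of a part of size $m$.

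The hard part will be identifying the origin of the third summand $(\alpha-1)(m-1)\,a_{\rho\cup(m)}^{k-1,(\alpha)}$, which vanishes at $\alpha=1$ and reduces to the ``twist'' contribution $(m-1)$ at $\alpha=2$. No analog of the coset-type graph $G_\sigma$ exists for generic $\alpha$, so this term must appear as an algebraic anomaly, linear in the deformation parameter $\alpha-1$. A natural approach is to work inside a Nazarov--Sklyanin or degenerate affine Hecke algebra realization of Jack polynomials, where Jucys--Murphy-like operators do exist and whose commutators naturally carry such $(\alpha-1)$ corrections; one would replay the case analysis of paragraph~\ref{SubsectNewInd} in that setting, the extra term emerging from a commutator that is invisible at $\alpha=1$. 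As a complementary strategy, one could try an induction on the filtration quantity $k-(|\rho|+m-\ell(\rho))$, using the known cases $\alpha=1$ (Theorem~\ref{ThmNewInd}) and $\alpha=2$ (Theorem~\ref{ThmNewInd2}) together with a polynomiality-in-$\alpha$ degree bound for both sides to pin down the identity for all $\alpha$.
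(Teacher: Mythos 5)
This statement is stated in the paper as a \emph{conjecture}, and the paper offers no proof of it; the author explicitly says that, lacking a combinatorial description of the basis $\theta_\mu^{(\alpha)}$ of $Z_n$, he cannot prove it. Your proposal is in the same position: it is a research plan, not a proof. Every step that would constitute the actual argument is left unexecuted. The central missing object is the $\alpha$-analog of the recursion~\eqref{EqRecH}: for $\alpha=1$ and $\alpha=2$ the ``add a content'' operator is literally multiplication by a Jucys--Murphy element inside a concrete algebra ($\ring[S_n]$ or the Hecke algebra of $(S_{2n},H_n)$), and the three terms of the identity come from an explicit case analysis on cycles or loops. For generic $\alpha$ no such algebra is known, and asserting that a ``Jack--Pieri weight'' will produce exactly the factor $\alpha\rho_i$, or that a commutator in a Nazarov--Sklyanin-type realization will produce exactly $(\alpha-1)(m-1)a^{k-1,(\alpha)}_{\rho\cup(m)}$, is precisely restating the conjecture, not proving it. You acknowledge this yourself when you call the third summand ``the hard part'' and describe only where one might look for it.

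The fallback strategy is also insufficient as stated. The coefficients $a_\rho^{k,(\alpha)}$ are polynomials in $\alpha$ (this follows from Lassalle's recursion in \cite[Section 11]{LassalleJM}), but their degree in $\alpha$ grows with $k$ and $|\rho|$; it is not bounded by $1$. Verifying the identity at the two points $\alpha=1$ and $\alpha=2$ therefore only shows that the difference of the two sides is divisible by $(\alpha-1)(\alpha-2)$, which does not force it to vanish. To make interpolation work you would need a degree bound on the difference of the two sides that beats the number of values of $\alpha$ at which the identity is known, and no such bound is provided or plausible from the known recursions. In short: the statement remains open, and neither of your two strategies closes the gap.
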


Unfortunately, as we do not have a combinatorial description of the basis 
$\theta_\mu^{(\alpha)}$ in the algebra $Z_n$, we are not able to prove it.
With Lassalle's algebraic approach, one can prove a generalization of Theorem~\ref{ThmLassalle}
(see \cite[Section 11]{LassalleJM}) which is weaker than Conjecture~\ref{ConjGenAlpha}.
Nevertheless, his formula is sufficient to compute inductively the
$a_\rho^{k,(\alpha)}$ and has been used in our numerical exploration. 

In the author's opinion, this conjecture is a hint towards the existence of combinatorial constructions
for other values of the parameter $\alpha$ (like the conjectures of papers
\cite{GouldenJacksonMatchingJackConjecture,LassalleJackMultirectangular,LassalleJackFreeCumulants}).

\section*{Acknowledgments}
 This article has been partially written during a research visit to University of Waterloo.
 The author would like to thank Ian Goulden for his hospitality there.
 He also thanks Michel Lassalle, Sho Matsumoto, Jonathan Novak and Amarpreet Rattan for stimulating discussions on the subject.

\bibliographystyle{alpha}

\bibliography{../courant}

\end{document}